\documentclass[11pt, a4paper, oneside, fleqn]{article}
\usepackage[british]{babel}
\usepackage[english]{varioref}
\usepackage{amsmath,amsthm,amssymb}
\usepackage{enumitem}
\usepackage[square, sort, numbers]{natbib}
\usepackage{lscape}
\usepackage{multicol}
\usepackage{float}
\usepackage{multirow}
\usepackage{algorithmic}
\usepackage[plain]{algorithm}
\usepackage{longtable}
\usepackage{comment}
\usepackage{url}
\usepackage{hyperref}
\usepackage{tikz}

\newcommand*{\FULLVERSION}{}

\includecomment{comment:instanceREA150MainIdeaOfOurApproachIterationAll}
%\excludecomment{comment:instanceREA150MainIdeaOfOurApproachIterationAll}
\includecomment{comment:computationalResultsForRandomEuclideanInstances}
%\excludecomment{comment:computationalResultsForRandomEuclideanInstances}

\def\mainApproach{BasicIntegerTSP}

\floatstyle{ruled}

%\addto\captionsenglish{\renewcommand{\figurename}{Fig.}}

\newcommand{\argmin}[1]{\underset{#1}{\operatorname{arg}\,\operatorname{min}}\;}

\newcommand*{\defeq}{\mathrel{\vcenter{\baselineskip0.5ex \lineskiplimit0pt
                     \hbox{\scriptsize.}\hbox{\scriptsize.}}}%
                     =}

\makeatletter
	\newenvironment{figurehere}
		{\def\@captype{figure}}
		{}
\makeatother

\usetikzlibrary{decorations.pathreplacing,decorations.pathmorphing}

\textheight=21.00truecm
\textwidth=14.75truecm
\hoffset=-1.00truecm
\voffset=-0.50truecm

\begin{document}
\newtheorem{axiom}{Axiom}%%%[section]
\newtheorem{conjecture}[axiom]{Conjecture}
\newtheorem{corollary}[axiom]{Corollary}
\newtheorem{definition}[axiom]{Definition}
\newtheorem{example}[axiom]{Example}
\newtheorem{lemma}[axiom]{Lemma}
\newtheorem{observation}[axiom]{Observation}
\newtheorem{proposition}[axiom]{Proposition}
\newtheorem{theorem}[axiom]{Theorem}

\newcommand{\rz}{{\mathbb{R}}}
\newcommand{\nz}{{\mathbb{N}}}
\newcommand{\zz}{{\mathbb{Z}}}
\newcommand{\eps}{\varepsilon}
\newcommand{\cei}[1]{\lceil #1\rceil}
\newcommand{\flo}[1]{\left\lfloor #1\right\rfloor}
\newcommand{\seq}[1]{\langle #1\rangle}
\newcommand{\qap}{\mbox{\sc QAP}}
\newcommand{\minqap}{\mbox{$\min$-{\sc QAP}}}
\newcommand{\maxqap}{\mbox{$\max$-{\sc QAP}}}
\newcommand{\onion}{\mbox{\sc Onion}}
\newcommand{\ocone}{\mbox{\sc OnionCone}}
\newcommand{\aaa}{\alpha}
\newcommand{\minctv}{\mbox{$\min$-{\sc CTV}}}
\newcommand{\maxctv}{\mbox{$\max$-{\sc CTV}}}
\newcommand{\cbar}{\overline{C}{}}
\newcommand{\var}{\mbox{\sc Var}}

\title{\bf Generating subtour elimination constraints \\ for the TSP from pure integer solutions}
\author{
	\sc Ulrich Pferschy{\footnotemark[1]}
	\and
	\sc Rostislav Stan\v{e}k{\footnotemark[1]}
}
\date{}
\maketitle
\renewcommand{\thefootnote}{\fnsymbol{footnote}}
\footnotetext[1]{
	{\tt \{pferschy, rostislav.stanek\}@uni-graz.at}.
	Department of Statistics and Operations Research, University of Graz, 
	Universit\"atsstra{\ss}e 15, 8010 Graz, Austria}
\renewcommand{\thefootnote}{\arabic{footnote}}

\begin{abstract}
The {\em traveling salesman problem} ({\em TSP}) is one of the most prominent combinatorial optimization problems. Given a complete graph $G = (V, E)$ and non-negative distances d for every edge, the TSP asks for a shortest tour through all vertices with respect to the distances d. The method of choice for solving the TSP to optimality is a {\em branch and cut approach}. Usually the {\em integrality constraints} are relaxed first and all separation processes to identify violated inequalities are done on {\em fractional solutions}.

In our approach we try to exploit the impressive performance of current ILP-solvers and work only with integer solutions without ever interfering with fractional solutions. We stick to a very simple ILP-model and relax the {\em subtour elimination constraints} only. The resulting problem is solved to integer optimality, violated constraints (which are trivial to find) are added and the process is repeated until a feasible solution is found.
	
In order to speed up the algorithm we pursue several attempts to find as many {\em relevant} subtours as possible. These attempts are based on the clustering of vertices with additional insights gained from empirical observations and random graph theory. Computational results are performed on test instances taken from the \mbox{{\em TSPLIB95}} and on {\em random Euclidean graphs}.
\end{abstract}

\medskip
\noindent\emph{Keywords.}
	traveling salesman problem; subtour elimination constraint; ILP solver; random Euclidean graph

%\addtolength{\tabcolsep}{-3pt}

\section{Introduction}
\label{sec:intro}

The {\em Traveling Salesman/Salesperson Problem} {\em TSP} is one of the best known
and most widely investigated combinatorial optimization problems
with four famous books entirely devoted to its study
(\cite{llr85}, \cite{TheTravelingSalesmanComputationalSolutionsForTSPApplications}, \cite{gupu06}, \cite{TheTravelingSalesmanProblemAComputionalStudy}).
Thus, we will refrain from giving extensive references
but mainly refer to the treatment in \cite{TheTravelingSalesmanProblemAComputionalStudy}.
Given a complete graph $G = (V, E)$ with $|V| = n$ and
$|E| = m=n(n-1)/2$, and nonnegative distances $d_e$ for each $e \in E$,
the TSP asks for a shortest tour with respect to the distances $d_e$
containing each vertex exactly once.

Let $\delta(v):=\{e=(v,u)\in E \mid u \in V\}$ denote the set of all edges
adjacent to $v\in V$.
Introducing binary variables $x_e$ for the possible inclusion of any edge $e\in E$ in the tour
we get the following classical ILP formulation:
	\begin{alignat}{5}
		\label{equation:TSP:ILP1}
		\mbox{minimize} \quad
			&& \sum_{e \in E} d_e x_e \quad
				&&
					&&
						&&\\
		\label{equation:TSP:ILP2}
		\mbox{s.t.} \quad
			&& \sum_{e \in \delta(v)}{x_e} \quad
				&& = \quad
					&&& 2 \quad
						&& \forall\; v \in V,\\
		\label{equation:TSP:ILP3}
			&& \sum_{\substack{{e=(u,v) \in E}\\{u,v \in S}}}{x_e} \quad
				&& \leq \quad
					&&& |S| - 1 \quad
						&&  \forall\;S \subset V, S \neq\emptyset,\\
		\label{equation:TSP:ILP4}
			&& x_e \quad
				&& \in \quad
					&&& \{0, 1\} \quad
						&& \forall\; e \in E
	\end{alignat}
(\ref{equation:TSP:ILP1}) defines the {\em objective function},
	(\ref{equation:TSP:ILP2}) is the {\em degree equation} for each vertex,
	 	(\ref{equation:TSP:ILP3}) are the {\em subtour elimination constraints},
	 	which forbid solutions consisting of several disconnected tours,
	and finally (\ref{equation:TSP:ILP4}) defines the {\em integrality constraints}. 
Note also that some subtour elimination constraints are redundant:
For the vertex sets $S \subset V$, $S \neq\emptyset$, and $S^\prime = V \backslash S$ 
we get pairs of subtour elimination constraints 
both enforcing the connection of $S$ and $S^\prime$.
%yielding the same cutting planes.

The established standard approach to solve TSP to optimality, as pursued successfully
during the last 30+ years,
is a branch-and-cut approach, which solves the LP-relaxation obtained by
relaxing the integrality constraints (\ref{equation:TSP:ILP4})
into $x_e \in [0,1]$.
In each iteration of the underlying branch-and-bound scheme
cutting planes are generated, i.e.\ constraints that are violated by the current
fractional solution, but not necessarily by any feasible integer solution.
Since there exists an exponential number of subsets $S \subset V$ implying subtour elimination constraints	 (\ref{equation:TSP:ILP3}),
the computation starts with a small collection of subsets $S \subset V$ (or none at all),
and identifies violated subtour elimination constraints as cutting planes
in the so-called separation problem.
Moreover, a wide range of other cutting plane families were developed in the literature
together with heuristic and exact algorithms to find them
(see e.g.\ \cite[ch.~58]{sch03}, \cite{TheTravelingSalesmanProblemAComputionalStudy}).
Also the undisputed champion among all TSP codes,
the famous {\em Concorde} package (see~\cite{TheTravelingSalesmanProblemAComputionalStudy}),
is based on this principle.

\medskip
In this paper we introduce and examine another concept for solving the TSP.
In Section~\ref{section:newApproachToSolveTheTSPToOptimality} we introduce the basic idea of our approach. 
Some improvement strategies follow in Section~\ref{sec:gensub}
with our best approach presented in Subsection~\ref{sec:hier}.
Since the main contribution of this paper are computational experiments, we discuss them in detail in Section~\ref{subsection:computationalResults}. The common details of all these tests will be given in Subsection~\ref{sec:compsetup}. In Section~\ref{section:someTheoreticalResultsAndFurtherEmpiricalObservations}, we present some theoretical results and further empirical observations.
Finally, we provide an Appendix with illustrations, graphs and two summarizing tables (Tables~\ref{table:resultsForTheMainApproachAndForDifferentVariantsOfTheApproachWhichUsesTheHierarchicalClustering} and \ref{table:comparisonBetweenDifferentVariantsOfOurApproach}).

\section{General solution approach}
	\label{section:newApproachToSolveTheTSPToOptimality}

Clearly, the performance of the above branch-and-cut approach
depends crucially on the performance of the used LP-solver.
Highly efficient LP-solvers have been available for quite some time,
but also ILP-solvers have improved rapidly during the last decades and reached an impressive performance.
This motivated the idea of a very simple approach for solving TSP without
using LP-relaxations explicitly.

The general approach works as follows
(see Algorithm~\ref{algorithm:mainIdeaOfOurApprach}).
First, we relax all subtour elimination constraints (\ref{equation:TSP:ILP3}) from the model
and solve the remaining ILP model (corresponding to a {\em weighted 2-matching problem}).
Then we check if the obtained integer solution contains subtours.
If not, the solution is an optimal TSP tour.
Otherwise, we find all subtours in the integral solution
(which can be done by a simple scan)
and add the corresponding subtour elimination constraints to the model, 
each of them represented by the subset of vertices in the corresponding subtour.
%(in order to avoid the subtour elimination constraint duplicity, we always take the vertex set $S$ consisting %of all vertices contained in the particular subtour).
The resulting enlarged ILP model is solved again to optimality.
Iterating this process clearly leads to an optimal TSP tour.

\begin{algorithm}
\begin{algorithmic}[1]
				\REQUIRE TSP instance
				\ENSURE an optimal TSP tour
				\STATE define current model as
    (\ref{equation:TSP:ILP1}), (\ref{equation:TSP:ILP2}), (\ref{equation:TSP:ILP4});
    			\label{algorithm:mainIdeaOfOurApprach:startILP}
				\REPEAT
					\STATE solve the current model to optimality by an ILP-solver;					 \label{algorithm:mainIdeaOfOurApprach:solveTheCurrentProblemToOptimalityByUsingAnILPSolver}
					\IF{solution contains no subtour}
                        \STATE set the solution as optimal tour;
                     \ELSE
						\STATE find all subtours of the solution and add the corresponding subtour elimination constraints into the model;
						 \label{algorithm:mainIdeaOfOurApprach:findAllSubtoursAndIntroduceTheCorrespondingSubtourConstraintsIntoTheModel}
					\ENDIF
				\UNTIL{optimal tour found};
\end{algorithmic}
\caption{Main idea of our approach.}
\label{algorithm:mainIdeaOfOurApprach}
\end{algorithm}

Every execution of the ILP-solver (see line~\ref{algorithm:mainIdeaOfOurApprach:solveTheCurrentProblemToOptimalityByUsingAnILPSolver})
will be called an {\em iteration}.
We define the {\em set of violated subtour elimination constraints} as the set of all included subtour elimination constraints which were violated in an iteration (see line~\ref{algorithm:mainIdeaOfOurApprach:findAllSubtoursAndIntroduceTheCorrespondingSubtourConstraintsIntoTheModel}). Figures~\ref{figure:instance:REA150} and \ref{figure:instanceREA150MainIdeaOfOurApproachIteration0}~--~\ref{figure:instanceREA150MainIdeaOfOurApproachIteration11} in the Appendix illustrate a problem instance and the execution of the algorithm on this instance respectively. 

%The distances between every two vertices $d_e$ for all $e = (u, v)$ correspond to
%their respective Euclidean distances in this example.

\medskip
It should be pointed out that the main motivation of this framework
is its simplicity.
The separation of subtour elimination constraints for fractional solutions amounts to the solution of a max-flow or min-cut problem.
Based on the procedure by Padberg and Rinaldi~\cite{pari90},
extensive work has been done
to construct elaborated algorithms for performing this task efficiently.
On the contrary, violated subtour elimination constraints of integer solutions are trivial to find.
Moreover, we refrain from using any other additional inequalities
known for classical branch-and-cut algorithms,
which might also be used to speed up our approach, since we want to underline
the strength of modern ILP-solvers in connection with a
refined subtour selection process (see Section~\ref{sec:hier}).

This approach for solving TSP is clearly not new but was available since the earliest ILP formulation
going back to \cite{SolutionOfALargeScaleTravelingSalesmanProblem} and can be seen as folklore
nowadays.
Several authors followed the concept of generating integer solutions
for some kind of relaxation of an ILP formulation and iteratively adding
violated integer subtour elimination constraints.
However, it seems that the lack of fast ILP-solvers prohibited its direct application
in computational studies although it was used in an artistic context by~\cite{ConnectingTheDotsTheInsAndOutsOfTSPArt}.

Miliotis~\cite{mil76} also concentrated on generating
integer subtour elimination constraints, but within a fractional LP framework.
The classical paper by Crowder and Padberg \cite{SolvingLargeScaleSymmetricTravellingSalesmanProblemsToOptimality}
applies the iterative generation of integer subtour elimination constraints as a second part
of their algorithm after generating fractional cutting planes in the first part
to strengthen the LP-relaxation.
They report that not more than three iterations of the ILP-solver
for the strengthened model
were necessary for test instances up to 318 vertices.
Also Gr\"otschel and Holland~\cite{SolutionOfLargeScaleSymmetricTravellingSalesmanProblems} follow this direction of
first improving the LP-model as much as possible,
e.g.\ by  running preprocessing, fixing certain variables
and strengthening the LP-relaxation by different families
of cutting planes, before
generating integer subtours as last step to find an optimal tour.
It turns out that about half of their test instances
never reach this last phase.
In contrast, we stick to the pure ILP-formulation
without any previous modifications.
%-------
%It seems to us that this approach for solving TSP
%has not been followed before
%(mainly for the lack of sufficiently fast ILP-solvers)
%although it is a bit far-fetched to call it a ``new method''.
%Note that \cite{mil76} also concentrated on generating
%integer subtour elimination constraints, but within a fractional LP framework.
%------------
		
\medskip
From a theoretical perspective, the generation of subtours involves a certain trade-off.
For an instance $(G, d)$ there exists a minimal set of subtours ${\cal S}^*$,
such that the ILP model with only those subtour elimination constraints implied by ${\cal S}^*$
yields an overall feasible, and thus optimal solution.
However, in practice we can only find collections of subtours larger than ${\cal S}^*$
by adding subtours in every iteration until we reach optimality.
Thus, we can either collect as many subtours as possible in each iteration,
which may decrease the number of iterations but increases the running time of
the ILP-solver because of the larger number of constraints.
Or we try to control the number of subtour elimination constraints added to the model
by trying to judge their relevance and possibly remove some of them later,
which keeps the ILP-model smaller
but may increase the number of iterations.
In the following we describe various strategies to find the ``right'' subtours.

\subsection{Representation of subtour elimination constraints}
\label{sec:repsub}

%Clearly, the performance of the suggested approach depends crucially on the performance
%of the underlying ILP-solver.
%In this respect it should be noted that
The subtour elimination constraints~(\ref{equation:TSP:ILP3})
can be expressed equivalently by the following cut constraints:
\begin{equation}\label{eq:subtour2}
\sum_{\substack{{e=(u,v) \in E}\\{u \in S, v \not\in S}}}{x_e} \ \geq\  2 \qquad	
\ \forall\;S \subset V, S \neq\emptyset
\end{equation}
Although mathematically equivalent, the two ways of forbidding a subtour in $S$
may result in quite different performances of the ILP-solver.

It was observed that in general the running time for solving an ILP increases with the number of non-zero entries of the constraint matrix.
Hence, we also tested a hybrid variant which chooses between (\ref{equation:TSP:ILP3})
and (\ref{eq:subtour2}) by picking for each considered set $S$
the version with the smaller number of nonnegative coefficients on the left-hand side
as follows:
\begin{equation}\label{eq:subtour3}
	\begin{array}{llll}
		\sum_{\substack{{e=(u,v) \in E}\\{u,v \in S}}}{x_e} \ \leq \ |S| - 1
			& \qquad \multirow{2}{*}{$\ \forall\;S \subset V, S \neq\emptyset$} \qquad
				& \mbox{if}
					& |S| \leq \frac{2 n + 1}{3}\\
		\sum_{\substack{{e=(u,v) \in E}\\{u \in S, v \not\in S}}}{x_e} \ \geq \ 2
			&
				& \mbox{if}
					& |S| > \frac{2 n + 1}{3}\\
	\end{array}
\end{equation}
We performed computational tests of our approach
to compare the three representations of subtour elimination constraints,
namely (\ref{equation:TSP:ILP3}), (\ref{eq:subtour2}) and (\ref{eq:subtour3}),
and list the results in
Table~\ref{table:comparisonOfTheBehaviourOfTheAlgorithmIfUsingDifferentRepresentationsOfSubtourConstraints}.
Technical details about the setup of the experiments can be found in Subsection~\ref{sec:compsetup}.

\begin{table}[!ht]
	\begin{center}
		\begin{tabular}{l||rrr|rrr|rrr}
			{\bf instance} & \multicolumn{3}{c|}{\bf s.e.c. as in~(\ref{equation:TSP:ILP3})} & \multicolumn{3}{c|}{\bf s.e.c. as in~(\ref{eq:subtour2})} & \multicolumn{3}{c}{\bf s.e.c. as in~(\ref{eq:subtour3})}\\
			& {\bf sec.} & {\bf \#i.} & {\bf \#c.} & {\bf sec.} & {\bf \#i.} & {\bf \#c.} & {\bf sec.} & {\bf \#i.} & {\bf \#c.}\\
			\hline\hline
			kroA150
				& 89
					& 12
						& 82
							& 75
								& 12
									& 82
										& {\bf 62}
											& {\bf 12}
												& {\bf 82}\\
			
			kroB150
				& {\bf 52}
					& {\bf 13}
						& {\bf 77}
							& 237
								& 13
									& 77
										& 54
											& 13
												& 77\\
			
			u159
				& {\bf 9}
					& {\bf 5}
						& {\bf 39}
							& 13
								& 5
									& 39
										& {\bf 9}
											& {\bf 5}
												& {\bf 38}\\
			
			brg180
				& 62
					& 14
						& 56
							& {\bf 36}
								& {\bf 5}
									& {\bf 29}
										& 64
											& 16
												& 67\\
			
			kroA200
				& 2153
					& 11
						& 95
							& {\bf 1833}
								& {\bf 11}
									& {\bf 95}
										& 2440
											& 11
												& 95\\
			
			kroB200
				& 45
					& 7
						& 65
							& 146
								& 7
									& 65
										& {\bf 37}
											& {\bf 7}
												& {\bf 65}\\
			
			tsp225
				& {\bf 149}
					& {\bf 15}
						& {\bf 102}
							& 376
								& 16
									& 105
										& 155
											& 16
												& 106\\
			
			a280
				& {\bf 114}
					& {\bf 10}
						& {\bf 59}
							& 249
								& 10
									& 56
										& 132
											& 10
												& 63\\
			
			lin318
				& 7171
					& 13
						& 177
							& 8201
								& 13
									& 177
										& {\bf 7158}
											& {\bf 13}
												& {\bf 177}\\
			
			gr431
				& 5973
					& 22
						& 186
							& 19111
								& 22
									& 187
										& {\bf 5925}
											& {\bf 22}
												& {\bf 186}\\
			
			pcb442
				& 4406
					& 43
						& 215
							& 6186
								& 41
									& 197
										& {\bf 2393}
											& {\bf 43}
												& {\bf 207}\\
			
			gr666
				& {\bf 33259}
					& {\bf 14}
						& {\bf 216}
							& 189421
								& 14
									& 217
										& 40111
											& 14
												& 216\\
			
			\hline
			
			{\em mean ratio (sec.)}
				& \multicolumn{3}{c}{}
					& \multicolumn{3}{c}{\em 2.305960}
						& \multicolumn{3}{c}{\em 0.971694}\\
			
			\hline\hline
			
			RE\_A\_150
				& {\bf 23}
					& {\bf 12}
						& {\bf 61}
							& 65
								& 12
									& 61
										& 26
											& 12
												& 61\\
			
			RE\_A\_200
				& 81
					& 15
						& 84
							& 139
								& 15
									& 84
										& {\bf 76}
											& {\bf 15}
												& {\bf 84}\\
			
			RE\_A\_250
				& 156
					& 14
						& 82
							& 208
								& 14
									& 82
										& {\bf 133}
											& {\bf 14}
												& {\bf 82}\\
			
			RE\_A\_300
				& {\bf 534}
					& {\bf 14}
						& {\bf 123}
							& 4819
								& 14
									& 123
										& 692
											& 14
												& 123\\
			
			RE\_A\_350
				& {\bf 404}
					& {\bf 9}
						& {\bf 110}
							& 789
								& 9
									& 110
										& 650
											& 9
												& 110\\
			
			RE\_A\_400
				& 49234
					& 16
						& 179
							& 247511
								& 16
									& 179
										& {\bf 24619}
											& {\bf 16}
												& {\bf 179}\\
			
			RE\_A\_450
				& 4666
					& 8
						& 117
							& 13806
								& 8
									& 117
										& {\bf 3022}
											& {\bf 8}
												& {\bf 117}\\
			
			RE\_A\_500
				& 68215
					& 12
						& 167
							& 155977
								& 12
									& 167
										& {\bf 30809}
											& {\bf 12}
												& {\bf 167}\\
			
			\hline
			
			{\em mean ratio (sec.)}
				& \multicolumn{3}{c}{}
					& \multicolumn{3}{c}{\em 3.390678}
						& \multicolumn{3}{c}{\em 0.928176}\\
			
			\hline\hline
			
			{\em mean ratio all}
				& \multicolumn{3}{c}{}
					& \multicolumn{3}{c}{\em 2.739847}
						& \multicolumn{3}{c}{\em 0.954287}\\
			
		\end{tabular}
	\end{center}
	\caption{Comparison of the behavior of the algorithm for different representations of subtour elimination constraints. {\em Mean ratios} refer to the arithmetic
 means over ratios between the running times of the approaches using the subtour elimination constraints represented as in~(\ref{eq:subtour2}) and (\ref{eq:subtour3}) respectively and the running time of the approach using the subtour elimination constraints represented as in~(\ref{equation:TSP:ILP3}). ``sec.'' is the time in seconds, ``\#i.'' the number of iterations and ``\#c.'' the number of subtour elimination constraints added to the ILP before starting the last iteration.}
	 \label{table:comparisonOfTheBehaviourOfTheAlgorithmIfUsingDifferentRepresentationsOfSubtourConstraints}
\end{table}

It turned out that the three versions sometimes (but not always) lead to huge differences in running time (up to a factor of 5).
This is an interesting experience that should be taken into consideration also in other computational studies.
From our limited experiments it could be seen that version (\ref{eq:subtour2})
was inferior most of the times (with sometimes huge deviations) whereas only a small dominance of the hybrid variant (\ref{eq:subtour3}) in comparison with the standard version (\ref{equation:TSP:ILP3}) could be observed.
This is due to the small size of  most subtours occurring during the solution process
(the representation (\ref{equation:TSP:ILP3}) equals to the representation (\ref{eq:subtour3}) in these cases). But since also bigger subtours can occur (mostly in the last iterations), we use the representation (\ref{eq:subtour3})
for all further computational tests.
For more details about different ILP-models 
see~\cite{AComparativeAnalysisOfSeveralAsymmetricTravelingSalesmanProblemFormulations}.

\section{Generation of subtours}
\label{sec:gensub}

As pointed out above, the focus of our attention lies in the
generation and selection of a ``good'' set of subtour elimination constraints,
including as many as possible of those required by the ILP-solver
to determine an optimal solution which is also feasible for TSP,
but as few as possible of all others which only slow down the
performance of the ILP-solver.

Trying to strike a balance between these two goals we followed
several directions, some of them motivated by theoretical results,
others by visually studying plots of all subtours generated during the
execution of Algorithm \ref{algorithm:mainIdeaOfOurApprach}.

\subsection{Subtour elimination constraints from suboptimal integer solutions}
\label{sec:suboptimal}

Many ILP-solvers report all feasible integer solutions found during the underlying branch-and-bound process.
In this case, we can also add all corresponding subtour elimination constraints to the model.
These constraints can be considered simply as part of the set of violated subtour elimination constraints.
Not surprisingly, these additional constraints always lead to a
decrease in the number of iterations for the overall computation
and to an increase in the total number of subtour elimination constraints generated before reaching
optimality (see Table~\ref{table:comparisonOfTheBehaviourOfTheAlgorithmInTheCaseOfUsingAllConstraintsGeneratedFromAllFeasibleSolutionsFoundDuringTheSolvingProcessAndInTheCaseOfUsingOnlyTheConstraintsGeneratedFromTheFinalILP}).
While the time consumed in each iteration is likely to increase,
it can also be observed that the overall running time
is often decreased significantly by adding all detected subtours to the model.
On the other hand, for the smaller number of instances where this is not the case, only relatively
modest increases of running times are incurred.
Therefore, we stick to adding all detected subtour elimination constraints
for the remainder of the paper.
The algorithm in this form will be called {\em \mainApproach}.

\begin{table}[!ht]
	\begin{center}
		\begin{tabular}{l||rrr|rrr}
			& \multicolumn{3}{c|}{\bf only subtours} & \multicolumn{3}{c}{\bf all subtours:}\\&\multicolumn{3}{c|}{\bf from ILP-optima}&  \multicolumn{3}{c}{\bf \mainApproach} \\
			{\bf instance} & {\bf sec.} & {\bf \#i.} & {\bf \#c.} & {\bf sec.} & {\bf \#i.} & {\bf \#c.}\\
			\hline\hline
			kroA150
				& 62
					& 12
						& 82
							& {\bf 19}
								& {\bf 7}
									& {\bf 136}\\
			
			kroB150
				& {\bf 54}
					& {\bf 13}
						& {\bf 77}
							& 179
								& 8
									& 148\\
			
			u159
				& 9
					& 5
						& 38
							& {\bf 6}
								& {\bf 4}
									& {\bf 49}\\
			
			brg180
				& 64
					& 16
						& 67
							& {\bf 44}
								& {\bf 4}
									& {\bf 103}\\
			
			kroA200
				& 2440
					& 11
						& 95
							& {\bf 677}
								& {\bf 8}
									& {\bf 237}\\
			
			kroB200
				& 37
					& 7
						& 65
							& {\bf 31}
								& {\bf 5}
									& {\bf 121}\\
			
			tsp225
				& {\bf 155}
					& {\bf 16}
						& {\bf 106}
							& 178
								& 9
									& 261\\
			
			a280
				& {\bf 132}
					& {\bf 10}
						& {\bf 63}
							& 157
								& 11
									& 143\\
			
			lin318
				& 7158
					& 13
						& 177
							& {\bf 6885}
								& {\bf 8}
									& {\bf 357}\\
			
			gr431
				& 5925
					& 22
						& 186
							& {\bf 2239}
								& {\bf 9}
									& {\bf 453}\\
			
			pcb442
				& {\bf 2393}
					& {\bf 43}
						& {\bf 207}
							& 2737
								& 11
									& 501\\
			
			gr666
				& 40111
					& 14
						& 216
							& {\bf 17711}
								& {\bf 8}
									& {\bf 789}\\
			
			\hline
			
			{\em mean ratio (sec.)}
				& \multicolumn{6}{c}{\em 0.946130}\\
			
			\hline\hline
			
			RE\_A\_150
				& 26
					& 12
						& 61
							& {\bf 23}
								& {\bf 8}
									& {\bf 100}\\
			
			RE\_A\_200
				& 76
					& 15
						& 84
							& {\bf 72}
								& {\bf 7}
									& {\bf 163}\\
			
			RE\_A\_250
				& {\bf 133}
					& {\bf 14}
						& {\bf 82}
							& 138
								& 9
									& 186\\
			
			RE\_A\_300
				& {\bf 692}
					& {\bf 14}
						& {\bf 123}
							& 866
								& 6
									& 295\\
			
			RE\_A\_350
				& 650
					& 9
						& 110
							& {\bf 411}
								& {\bf 5}
									& {\bf 252}\\
			
			RE\_A\_400
				& 24619
					& 16
						& 179
							& {\bf 8456}
								& {\bf 8}
									& {\bf 454}\\
			
			RE\_A\_450
				& 3022
					& 8
						& 117
							& {\bf 2107}
								& {\bf 5}
									& {\bf 279}\\
			
			RE\_A\_500
				& 30809
					& 12
						& 167
							& {\bf 15330}
								& {\bf 6}
									& {\bf 436}\\
			
			\hline
			
			{\em mean ratio (sec.)}
				& \multicolumn{6}{c}{\em 0.786451}\\
			
			\hline\hline
			
			{\em mean ratio all}
				& \multicolumn{6}{c}{\em 0.882259}\\
			
		\end{tabular}
	\end{center}
	\caption{Using all constraints generated from all feasible solutions found during the solving process vs.\ using only the constraints generated from the final ILP solutions of each iteration. 
	{\em Mean ratios} refer to the arithmetic means over ratios between the running times of 
	\mainApproach\ over the other approach. 
	``sec.'' is the time in seconds, ``\#i.'' the number of iterations and ``\#c.'' the number of subtour elimination constraints added to the ILP before starting the last iteration.}
 \label{table:comparisonOfTheBehaviourOfTheAlgorithmInTheCaseOfUsingAllConstraintsGeneratedFromAllFeasibleSolutionsFoundDuringTheSolvingProcessAndInTheCaseOfUsingOnlyTheConstraintsGeneratedFromTheFinalILP}
\end{table}

\ifdefined\FULLVERSION

\subsection{Subtours of size 3}
\label{sec:subtoursOfSize3}
The next idea we tried was to add subtour inequalities corresponding to some subtours of size $3$ into the model before starting the iteration process 
(i.e.\ in line~\ref{algorithm:mainIdeaOfOurApprach:startILP} 
of Algorithm~\ref{algorithm:mainIdeaOfOurApprach}). 
This idea was motivated by the observations that in many examples smaller subtours (with respect to their cardinality) occur more often than the larger ones. However, there are $\binom{|V|}{3}$ such subtours and thus we should concentrate only on a relevant subset of them. After studying our computational tests we decided to use the shortest ones with respect to their length. Table~\ref{table:usingNoSubtoursOfSize3VsUsingTheShortestSubtoursOfSize3ForGenerationOfSubtoursConstraintsBeforeStartingTheSolvingProcess} summarizes our computational results and it can be seen that this idea actually tends to slow down our approach. Thus we did not follow it any more.
\begin{table}[!ht]
	\begin{center}
		\begin{tabular}{l||rrr|rrr|rrr}
			{\bf instance} & \multicolumn{3}{c|}{\bf $p = 0$} & \multicolumn{3}{c|}{\bf $p = \frac{1}{10000}$} & \multicolumn{3}{c}{\bf $p = \frac{1}{1000}$}\\
			& {\bf sec.} & {\bf \#i.} & {\bf \#c.} & {\bf sec.} & {\bf \#i.} & {\bf \#c.} & {\bf sec.} & {\bf \#i.} & {\bf \#c.}\\
			\hline\hline
			kroA150
				& {\bf 19}
					& {\bf 7}
						& {\bf 136}
							& {\bf 19}
								& {\bf 7}
									& {\bf 97}
										& 40
											& 5
												& 116\\
			
			kroB150
				& 179
					& 8
						& 148
							& {\bf 71}
								& {\bf 7}
									& {\bf 178}
										& 134
											& 5
												& 105\\
			
			u159
				& {\bf 6}
					& {\bf 4}
						& {\bf 49}
							& 8
								& 4
									& 46
										& {\bf 6}
											& {\bf 3}
												& {\bf 24}\\
			
			brg180
				& 44
					& 4
						& 103
							& {\bf 34}
								& {\bf 15}
									& {\bf 108}
										& 82
											& 9
												& 270\\
			
			kroA200
				& 677
					& 8
						& 237
							& 879
								& 5
									& 157
										& {\bf 504}
											& {\bf 4}
												& {\bf 133}\\
			
			kroB200
				& {\bf 31}
					& {\bf 5}
						& {\bf 121}
							& 32
								& 5
									& 61
										& 43
											& 5
												& 60\\
			
			tsp225
				& 178
					& 9
						& 261
							& {\bf 149}
								& {\bf 10}
									& {\bf 224}
										& 167
											& 9
												& 202\\
			
			a280
				& 157
					& 11
						& 143
							& {\bf 138}
								& {\bf 9}
									& {\bf 98}
										& 156
											& 6
												& 101\\
			
			lin318
				& 6885
					& 8
						& 357
							& 5360
								& 8
									& 302
										& {\bf 1435}
											& {\bf 8}
												& {\bf 291}\\
			
			gr431
				& {\bf 2239}
					& {\bf 9}
						& {\bf 453}
							& 3196
								& 10
									& 534
										& 3648
											& 10
												& 571\\
			
			pcb442
				& {\bf 2737}
					& {\bf 11}
						& {\bf 501}
							& 3483
								& 15
									& 414
										& 3989
											& 14
												& 466\\
			
			gr666
				& {\bf 17711}
					& {\bf 8}
						& {\bf 789}
							& --
								& --
									& --
										& --
											& --
												& --\\
			
			\hline
			
			{\em mean ratio}
				& \multicolumn{3}{c}{}
					& \multicolumn{3}{c}{\em 1.002535}
						& \multicolumn{3}{c}{\em 1.188732}\\
			
			\hline\hline
			
			RE\_A\_150
				& {\bf 23}
					& {\bf 8}
						& {\bf 100}
							& 30
								& 7
									& 130
										& 30
											& 6
												& 77\\
			
			RE\_A\_200
				& 72
					& 7
						& 163
							& 74
								& 8
									& 135
										& {\bf 57}
											& {\bf 6}
												& {\bf 76}\\
			
			RE\_A\_250
				& {\bf 138}
					& {\bf 9}
						& {\bf 186}
							& 155
								& 7
									& 163
										& 140
											& 6
												& 109\\
			
			RE\_A\_300
				& {\bf 866}
					& {\bf 6}
						& {\bf 295}
							& 884
								& 6
									& 203
										& 1344
											& 7
												& 211\\
			
			RE\_A\_350
				& {\bf 411}
					& {\bf 5}
						& {\bf 252}
							& 642
								& 6
									& 147
										& 879
											& 6
												& 150\\
			
			RE\_A\_400
				& 8456
					& 8
						& 454
							& 6623
								& 7
									& 285
										& {\bf 4876}
											& {\bf 8}
												& {\bf 296}\\
			
			RE\_A\_450
				& 2107
					& 5
						& 279
							& {\bf 1226}
								& {\bf 4}
									& {\bf 220}
										& 5386
											& 5
												& 215\\
			
			RE\_A\_500
				& 15330
					& 6
						& 436
							& 13473
								& 6
									& 366
										& {\bf 6114}
											& {\bf 5}
												& {\bf 237}\\
			
			\hline
			
			{\em mean ratio}
				& \multicolumn{3}{c}{}
					& \multicolumn{3}{c}{\em 1.035264}
						& \multicolumn{3}{c}{\em 1.291607}\\
			
			\hline\hline
			
			{\em mean ratio all}
				& \multicolumn{3}{c}{}
					& \multicolumn{3}{c}{\em 1.016316}
						& \multicolumn{3}{c}{\em 1.232048}\\
			
		\end{tabular}
	\end{center}
	\caption{Using no subtours of size $3$ vs. using the shortest subtours of size $3$ for generation of subtour constraints before starting the solving process. The parameter $p$ defines the proportion of used subtour constraints. {\em Mean ratios} refer to the arithmetic
 means over ratios between the running times of the particular approaches and the running time of the \mainApproach\ (corresponding to $p = 0$). ``sec.'' is the time in seconds, ``\#i.'' the number of iterations and ``\#c.'' the number of subtour elimination constraints added to the ILP before starting the last iteration. The entries ``--'' by TSPLIB instances cannot be computed with 16 GB RAM.}
	\label{table:usingNoSubtoursOfSize3VsUsingTheShortestSubtoursOfSize3ForGenerationOfSubtoursConstraintsBeforeStartingTheSolvingProcess}
\end{table}

\subsection{Subtour selections}
\label{sec:subtourSelections}
As mentioned above, a large number of subtour inequalities which are not really needed only slow down our approach. Thus we also tried not to use all subtour inequalities we are able to generate during one iteration, but to make a proper selection. We again used our computational tests in order to identify two general properties which seem to point to such ``suitable'' subtour inequalities.
\begin{itemize}
	\item Sort all obtained subtours with respect to their cardinality, chose the smallest ones and added the corresponding subtour inequalities into the model.
	\item Sort all obtained subtours with respect to their length and proceed as above.
\end{itemize}
The corresponding results are summarized in Tables~\ref{table:usingAllSubtoursVsUsingOnlyTheSmallestSubtoursWithRespectToTheirCardinalityForGenerationOfSubtourConstraints} and \ref{table:usingAllSubtoursVsUsingOnlyTheSmallestSubtoursWithRespectToTheirLengthForGenerationOfSubtourConstraints} and it is obvious that this idea does not speed up our approach as intended. Thus we dropped it from our considerations.
\begin{table}[!ht]
	\begin{center}
		\begin{tabular}{l||rrr|rrr|rrr}
			{\bf instance} & \multicolumn{3}{c|}{\bf $p = 1$} & \multicolumn{3}{c|}{\bf $p = \frac{2}{3}$} & \multicolumn{3}{c}{\bf $p = \frac{1}{3}$}\\
			& {\bf sec.} & {\bf \#i.} & {\bf \#c.} & {\bf sec.} & {\bf \#i.} & {\bf \#c.} & {\bf sec.} & {\bf \#i.} & {\bf \#c.}\\
			\hline\hline
			kroA150
				& {\bf 19}
					& {\bf 7}
						& {\bf 136}
							& 34
								& 8
									& 109
										& 69
											& 19
												& 115\\
			
			kroB150
				& 179
					& 8
						& 148
							& {\bf 51}
								& {\bf 8}
									& {\bf 135}
										& 477
											& 15
												& 134\\
			
			u159
				& {\bf 6}
					& {\bf 4}
						& {\bf 49}
							& 30
								& 4
									& 52
										& 19
											& 11
												& 56\\
			
			brg180
				& 44
					& 4
						& 103
							& {\bf 27}
								& {\bf 6}
									& {\bf 77}
										& 59
											& 19
												& 80\\
			
			kroA200
				& {\bf 677}
					& {\bf 8}
						& {\bf 237}
							& 714
								& 7
									& 171
										& 2846
											& 14
												& 131\\
			
			kroB200
				& {\bf 31}
					& {\bf 5}
						& {\bf 121}
							& 39
								& 6
									& 98
										& 89
											& 13
												& 77\\
			
			tsp225
				& 178
					& 9
						& 261
							& {\bf 100}
								& {\bf 14}
									& {\bf 183}
										& 173
											& 34
												& 166\\
			
			a280
				& 157
					& 11
						& 143
							& {\bf 141}
								& {\bf 12}
									& {\bf 154}
										& 239
											& 27
												& 127\\
			
			lin318
				& {\bf 6885}
					& {\bf 8}
						& {\bf 357}
							& 7069
								& 12
									& 367
										& 9444
											& 32
												& 392\\
			
			gr431
				& {\bf 2239}
					& {\bf 9}
						& {\bf 453}
							& 3210
								& 20
									& 522
										& 4924
											& 38
												& 413\\
			
			pcb442
				& 2737
					& 11
						& 501
							& {\bf 1867}
								& {\bf 18}
									& {\bf 384}
										& 5129
											& 85
												& 386\\
			
			gr666
				& 17711
					& 8
						& 789
							& {\bf 7643}
								& {\bf 7}
									& {\bf 505}
										& 71594
											& 25
												& 597\\
			
			\hline
			
			{\em mean ratio}
				& \multicolumn{3}{c}{}
					& \multicolumn{3}{c}{\em 1.252892}
						& \multicolumn{3}{c}{\em 2.488345}\\
			
			\hline\hline
			
			RE\_A\_150
				& {\bf 23}
					& {\bf 8}
						& {\bf 100}
							& 28
								& 9
									& 109
										& 52
											& 17
												& 94\\
			
			RE\_A\_200
				& 72
					& 7
						& 163
							& {\bf 69}
								& {\bf 8}
									& {\bf 134}
										& 112
											& 23
												& 98\\
			
			RE\_A\_250
				& 138
					& 9
						& 186
							& {\bf 131}
								& {\bf 10}
									& {\bf 149}
										& 208
											& 20
												& 119\\
			
			RE\_A\_300
				& 866
					& 6
						& 295
							& {\bf 792}
								& {\bf 10}
									& {\bf 259}
										& 1720
											& 29
												& 293\\
			
			RE\_A\_350
				& {\bf 411}
					& {\bf 5}
						& {\bf 252}
							& 715
								& 7
									& 232
										& 849
											& 19
												& 177\\
			
			RE\_A\_400
				& {\bf 8456}
					& {\bf 8}
						& {\bf 454}
							& 129380
								& 8
									& 311
										& 107987
											& 26
												& 299\\
			
			RE\_A\_450
				& 2107
					& 5
						& 279
							& {\bf 1544}
								& {\bf 7}
									& {\bf 236}
										& 7987
											& 11
												& 238\\
			
			RE\_A\_500
				& 15330
					& 6
						& 436
							& 18594
								& 8
									& 324
										& {\bf 13738}
											& {\bf 16}
												& {\bf 308}\\
			
			\hline
			
			{\em mean ratio}
				& \multicolumn{3}{c}{}
					& \multicolumn{3}{c}{\em 2.878162}
						& \multicolumn{3}{c}{\em 3.354102}\\
			
			\hline\hline
			
			{\em mean ratio all}
				& \multicolumn{3}{c}{}
					& \multicolumn{3}{c}{\em 1.903000}
						& \multicolumn{3}{c}{\em 2.834648}\\
			
		\end{tabular}
	\end{center}
	\caption{Using all subtours vs. using only the smallest subtours with respect to their cardinality for generation of subtour constraints. The parameter $p$ defines the proportion of used subtour constraints. {\em Mean ratios} refer to the arithmetic
 means over ratios between the running times of the particular approaches and the running time of the \mainApproach\ (corresponding to $p = 1$). ``sec.'' is the time in seconds, ``\#i.'' the number of iterations and ``\#c.'' the number of subtour elimination constraints added to the ILP before starting the last iteration.}
	\label{table:usingAllSubtoursVsUsingOnlyTheSmallestSubtoursWithRespectToTheirCardinalityForGenerationOfSubtourConstraints}
\end{table}
\begin{table}[!ht]
	\begin{center}
		\begin{tabular}{l||rrr|rrr|rrr}
			{\bf instance} & \multicolumn{3}{c|}{\bf $p = 1$} & \multicolumn{3}{c|}{\bf $p = \frac{2}{3}$} & \multicolumn{3}{c}{\bf $p = \frac{1}{3}$}\\
			& {\bf sec.} & {\bf \#i.} & {\bf \#c.} & {\bf sec.} & {\bf \#i.} & {\bf \#c.} & {\bf sec.} & {\bf \#i.} & {\bf \#c.}\\
			\hline\hline
			kroA150
				& {\bf 19}
					& {\bf 7}
						& {\bf 136}
							& 41
								& 10
									& 131
										& 46
											& 16
												& 90\\
			
			kroB150
				& {\bf 179}
					& {\bf 8}
						& {\bf 148}
							& 495
								& 7
									& 152
										& 250
											& 16
												& 112\\
			
			u159
				& {\bf 6}
					& {\bf 4}
						& {\bf 49}
							& 14
								& 5
									& 60
										& 23
											& 12
												& 55\\
			
			brg180
				& 44
					& 4
						& 103
							& {\bf 24}
								& {\bf 13}
									& {\bf 86}
										& 161
											& 8
												& 78\\
			
			kroA200
				& {\bf 677}
					& {\bf 8}
						& {\bf 237}
							& 862
								& 6
									& 124
										& 1829
											& 13
												& 132\\
			
			kroB200
				& {\bf 31}
					& {\bf 5}
						& {\bf 121}
							& 59
								& 7
									& 121
										& 79
											& 11
												& 89\\
			
			tsp225
				& 178
					& 9
						& 261
							& {\bf 112}
								& {\bf 13}
									& {\bf 197}
										& 197
											& 32
												& 159\\
			
			a280
				& 157
					& 11
						& 143
							& {\bf 94}
								& {\bf 9}
									& {\bf 101}
										& 212
											& 21
												& 96\\
			
			lin318
				& {\bf 6885}
					& {\bf 8}
						& {\bf 357}
							& 7688
								& 13
									& 355
										& 9593
											& 36
												& 390\\
			
			gr431
				& {\bf 2239}
					& {\bf 9}
						& {\bf 453}
							& 6091
								& 15
									& 565
										& 9434
											& 45
												& 530\\
			
			pcb442
				& 2737
					& 11
						& 501
							& {\bf 2365}
								& {\bf 18}
									& {\bf 487}
										& 5913
											& 70
												& 399\\
			
			gr666
				& 17711
					& 8
						& 789
							& {\bf 14713}
								& {\bf 10}
									& {\bf 735}
										& --
											& --
												& --\\
			
			\hline
			
			{\em mean ratio}
				& \multicolumn{3}{c}{}
					& \multicolumn{3}{c}{\em 1.478194}
						& \multicolumn{3}{c}{\em 2.434945}\\
			
			\hline\hline
			
			RE\_A\_150
				& {\bf 23}
					& {\bf 8}
						& {\bf 100}
							& 24
								& 9
									& 115
										& 45
											& 22
												& 81\\
			
			RE\_A\_200
				& 72
					& 7
						& 163
							& {\bf 60}
								& {\bf 10}
									& {\bf 123}
										& 113
											& 25
												& 108\\
			
			RE\_A\_250
				& {\bf 138}
					& {\bf 9}
						& {\bf 186}
							& {\bf 138}
								& {\bf 7}
									& {\bf 117}
										& 209
											& 22
												& 103\\
			
			RE\_A\_300
				& {\bf 866}
					& {\bf 6}
						& {\bf 295}
							& 1099
								& 10
									& 321
										& 953
											& 23
												& 201\\
			
			RE\_A\_350
				& {\bf 411}
					& {\bf 5}
						& {\bf 252}
							& 876
								& 7
									& 231
										& 934
											& 16
												& 167\\
			
			RE\_A\_400
				& {\bf 8456}
					& {\bf 8}
						& {\bf 454}
							& 29625
								& 9
									& 311
										& 301125
											& 27
												& 378\\
			
			RE\_A\_450
				& {\bf 2107}
					& {\bf 5}
						& {\bf 279}
							& 2926
								& 7
									& 259
										& 4789
											& 14
												& 237\\
			
			RE\_A\_500
				& {\bf 15330}
					& {\bf 6}
						& {\bf 436}
							& 15786
								& 7
									& 329
										& 37460
											& 16
												& 330\\
			
			\hline
			
			{\em mean ratio}
				& \multicolumn{3}{c}{}
					& \multicolumn{3}{c}{\em 1.524891}
						& \multicolumn{3}{c}{\em 6.092589}\\
			
			\hline\hline
			
			{\em mean ratio all}
				& \multicolumn{3}{c}{}
					& \multicolumn{3}{c}{\em 1.496873}
						& \multicolumn{3}{c}{\em 3.975006}\\
			
		\end{tabular}
	\end{center}
	\caption{Using all subtours vs. using only the smallest subtours with respect to their length for generation of subtour constraints. The parameter $p$ defines the proportion of used subtour constraints. {\em Mean ratios} refer to the arithmetic
 means over ratios between the running times of the particular approaches and the running time of the \mainApproach\ (corresponding to $p = 1$). ``sec.'' is the time in seconds, ``\#i.'' the number of iterations and ``\#c.'' the number of subtour elimination constraints added to the ILP before starting the last iteration. The entries ``--'' by TSPLIB instances cannot be computed with 16 GB RAM.}
	\label{table:usingAllSubtoursVsUsingOnlyTheSmallestSubtoursWithRespectToTheirLengthForGenerationOfSubtourConstraints}
\end{table}

\fi

\subsection{Clustering into subproblems}
\label{sec:cluster}

It can be observed that many subtours have a local context,
meaning that a small subset of vertices separated from the remaining vertices
by a reasonably large distance will always be connected by one or more subtours,
independently from the size of the remaining graph (see also Figures~\ref{figure:instance:REA150} and \ref{figure:instanceREA150MainIdeaOfOurApproachIteration0} to~\ref{figure:instanceREA150MainIdeaOfOurApproachIteration11} in the Appendix).
Thus, we aim to identify {\em clusters} of vertices and run the \mainApproach\ on the induced subgraphs with the aim of generating within a very small
running time the same subtours occurring in the execution of the approach on the full graph.
Furthermore, we can use the optimal tour from every cluster to generate a corresponding subtour elimination constraint for the original instance and thus enforce a connection to the remainder of the graph.

For our purposes the clustering algorithm should fulfill the following properties:
\begin{itemize}
	\item {\em clustering quality:} The obtained clusters should correspond well to
	the distance structure of the given graph, as in a classical geographic clustering.
				\item {\em running time:} Should be low relative to the running time required for the main part of the algorithm.				
				\item {\em cluster size:} If clusters are too large, solving the TSP takes too much time.
				If clusters are too small, only few subtour elimination constraints are generated.
	\end{itemize}
			
Clearly, there is a huge body of literature on clustering algorithms (see e.g.~\cite{AlgorithmsForClusteringData}) and selecting one for a given
application will never satisfy all our objectives.
Our main restriction was the requirement of using a clustering algorithm which works
also if the vertices are not embeddable in Euclidean space, i.e.\ only arbitrary edge distances are given.
Simplicity being another goal, we settled for the following approach
described in Algorithm~\ref{algorithm:clusteringAlgorithm}:

\begin{algorithm}
	\begin{algorithmic}[1]
		\REQUIRE Complete graph $G = (V, E)$, where $|V| = n$ and $|E| = m = \frac{n (n - 1)}{2}$, distance function $d \colon E \to R^+_0$ and parameter $c$, where $1 \leq c \leq n$.
		\ENSURE Clustering ${\cal C} = \{V_1, \ldots, V_c\}$, where $V_1 \cup \ldots \cup V_c = V$.
		\STATE sort the edges such that $d_{e_1} \leq \ldots \leq d_{e_m}$;\label{algorithm:clusteringAlgorithm:startSetting1}
		\STATE define $G^\prime = (V^\prime, E^\prime)$ such that $V^\prime = V$ and $E^\prime = \emptyset$;\label{algorithm:clusteringAlgorithm:startSetting2}
		\STATE let $i \defeq 1$;\label{algorithm:clusteringAlgorithm:startSetting3}
		\STATE define ${\cal C} \defeq \big\{\{v_1\}, \ldots, \{v_n\}\big\}$;\label{algorithm:clusteringAlgorithm:nClusters}
		\WHILE{$|{\cal C}| > c$}\label{algorithm:clusteringAlgorithm:while}
			\STATE set $E^\prime \defeq E^\prime \cup \{e_i\}$;\label{algorithm:clusteringAlgorithm:newEdge}
			\STATE set ${\cal C} \defeq \{V_1, \ldots, V_{|\cal C|}\}$, where $V_1, \ldots, V_{|\cal C|}$ are the connected components of graph $G^\prime$;\label{algorithm:clusteringAlgorithm:newClustering}
		\ENDWHILE
	\end{algorithmic}
	\caption{Clustering algorithm.}
	\label{algorithm:clusteringAlgorithm}
\end{algorithm}

First, we fix the number of clusters $c$ with $1 \leq c \leq n$ and sort the edges in increasing order of distances (see line~\ref{algorithm:clusteringAlgorithm:startSetting1}).
Then we start with the empty graph $G^\prime = (V^\prime, E^\prime)$
(line~\ref{algorithm:clusteringAlgorithm:startSetting2}) containing only isolated vertices
(i.e.\ $n$ clusters) and add iteratively edges in increasing order of distances until the desired number of clusters $c$ is reached (see lines~\ref{algorithm:clusteringAlgorithm:while} and \ref{algorithm:clusteringAlgorithm:newEdge}).
In each iteration the current clustering is implied by the connected components of the current graph (see line~\ref{algorithm:clusteringAlgorithm:newClustering}). 
We denote this {\em clustering approach} by $C \mid c$.
Note that this clustering algorithm does not make any assumptions about the underlying TSP instance and does not exploit any structural properties of the {\em Metric TSP} or the
{\em Euclidean TSP}.
			
It was observed in our computational experiments 
%where this variant is denoted by $C|c$
that the performance of the TSP algorithm is not very sensitive to
small changes of the cluster number $c$
and thus a rough estimation of $c$ is sufficient.
The behavior of the running time as a function of $c$ can be found for
particular test instances in
Figure~\ref{figure:progressOfTheAmountOfTheComputationTimeTInSecondsOnTheParameterC},
see Section~\ref{sec:compexamp} for further discussion.

%\subsection{Restricted clustering into subproblems}
\subsection{Restricted clustering}
\label{sec:cluster2}

Although the clustering algorithm (see Algorithm~\ref{algorithm:clusteringAlgorithm}) decreases the computational time of the whole solution process for some test instances,
we observed a certain shortcoming.
There may easily occur clusters consisting of isolated points
or containing only two vertices.
Clearly, these clusters do not contribute any subtour on their own.
Moreover, the degree constraints (\ref{equation:TSP:ILP2}) guarantee that
each such vertex is connected to the remainder of the graph in any case.
The connection of these vertices to some ``neighboring'' cluster
enforced in \mainApproach\ implies that the clustering yields
different subtours for these neighbors and not the violated subtour elimination constraints
arising in \mainApproach.

To avoid this situation, we want to impose a minimum cluster size of $3$.
An easy way to do so is as follows:
After reaching the $c$ clusters, continue to add edges in increasing order of distances (as before),
but add an edge only, if it is incident to one of the vertices in a connected component (i.e.\ cluster)
of size one or two. This means basically that we simply merge these small clusters to their nearest neighbor with respect to the actual clustering. Note that this is a step-by-step process and it can happen that two clusters of size $1$ merge first before merging the resulting pair to its nearest neighboring cluster. The resulting {\em restricted clustering approach} will be denoted by $RC_3 \mid c$.

%The restricted clustering the number of clusters $c^\prime$ decreases and, in general,
%$c^\prime \leq c \leq n$ holds.
			
Against our expectations, the computational experiments (see Section~\ref{subsection:computationalResults}) show that this approach often impacts the algorithm in the opposite way (see also Figure~\ref{figure:progressOfTheAmountOfTheComputationTimeTInSecondsOnTheParameterC} and Table~\ref{table:comparisonBetweenDifferentVariantsOfOurApproach} in the Appendix)
if compared for the same original cluster size $c$.

Surprisingly, we could observe an interesting behavior if $c \approx n$.
In this case, the main clustering algorithm (see Algorithm~\ref{algorithm:clusteringAlgorithm}) has almost no effect,
but the ``post-phase'' which enforces the minimum cluster size
yields a different clustering on its own.
This variant often beats the previous standard clustering algorithm with $c \ll n$ 
(see Table~\ref{table:comparisonBetweenDifferentVariantsOfOurApproach} in the Appendix). 
Note that we cannot fix the actual number of clusters $c^\prime$ in this case. But our computational results show that $c^\prime \approx \frac{n}{5}$ usually holds if the points are distributed relatively uniformly in the Euclidean plane and if the distances correspond to their relative Euclidean distances 
(see Figure \ref{figure:progressOfTheNumberOfObtainedClustersCPrimeOnTheNumberOfVerticesNForTheRestrictedClusteringWhereCEqualNInRandomEuclideanGraphs} in the Appendix).

\subsection{Hierarchical clustering}
\label{sec:hier}

It was pointed out in Section~\ref{sec:cluster} that the number of clusters $c$ is
chosen as an input parameter.
The computational experiments in Subsection~\ref{sec:compexamp}
give some indication on the behavior of Algorithm~\ref{algorithm:clusteringAlgorithm} for different values of $c$,
but fail to provide a clear guideline for the selection of $c$.
Moreover, from graphical inspection of test instances,
we got the impression that a larger number of relevant subtour elimination constraints
might be obtained by considering more clusters of moderate size.
In the following we present an idea that takes both of these aspects into account.

In our {\em hierarchical clustering} process denoted by $HC$
we do not set a cluster number $c$,
but let the clustering algorithm continue until all vertices are connected
(this corresponds to $c=1$).
The resulting clustering process can be represented by a binary {\em clustering tree}
which is constructed in a bottom-up way.
The leaves of the tree represent isolated vertices, i.e.\ the $n$ trivial clusters
given at the beginning of the clustering algorithm.
Whenever two clusters are merged by the addition of an edge,
the two corresponding tree vertices are connected to a new common parent vertex
in the tree representing the new cluster.
At the end of this process we reach the root of the clustering tree
corresponding to the complete vertex set.
An example of such a clustering tree is shown in Figures~\ref{figure:exampleIllustratingTheHierarchicalClustering:graph} and~\ref{figure:exampleIllustratingTheHierarchicalClustering:clustering}.
			
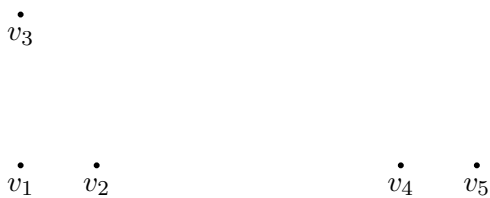
\begin{figure}[!ht]
				\begin{center}
					\begin{tikzpicture}[scale=1]
						\node[circle, draw=black!100, fill=black!100, thick, inner sep=0pt, minimum size=0.5mm, label=below:{\color{black} $v_1$}] (node0) at (0, 0) {};
						\node[circle, draw=black!100, fill=black!100, thick, inner sep=0pt, minimum size=0.5mm, label=below:{\color{black} $v_2$}] (node1) at (1, 0) {};
						\node[circle, draw=black!100, fill=black!100, thick, inner sep=0pt, minimum size=0.5mm, label=below:{\color{black} $v_3$}] (node2) at (0, 2) {};
						\node[circle, draw=black!100, fill=black!100, thick, inner sep=0pt, minimum size=0.5mm, label=below:{\color{black} $v_4$}] (node3) at (5, 0) {};
						\node[circle, draw=black!100, fill=black!100, thick, inner sep=0pt, minimum size=0.5mm, label=below:{\color{black} $v_5$}] (node4) at (6, 0) {};
					\end{tikzpicture}
				\end{center}
				\caption{Example illustrating the hierarchical clustering: Vertices of the TSP instance. Distances between every two vertices correspond to their respective Euclidean distances
in this example.}
				\label{figure:exampleIllustratingTheHierarchicalClustering:graph}
			\end{figure}

			\begin{figure}[!ht]
				\begin{center}
					\begin{tikzpicture}[
							xscale=1.7, yscale=1.7,
							nodeLevel1/.style={circle, draw=black!100, fill=white!0, thick, inner sep=0pt, minimum size=7mm},
							nodeLevel2/.style={circle, draw=black!100, fill=white!0, thick, inner sep=0pt, minimum size=10.5mm},
							nodeLevel3/.style={circle, draw=black!100, fill=white!0, thick, inner sep=0pt, minimum size=13mm},
							nodeLevel4/.style={circle, draw=black!100, fill=white!0, thick, inner sep=0pt, minimum size=20mm}
						]
						
						\node[nodeLevel1] (node1) at (0.00, 0.00) {$\{v_1\}$};
						\node[nodeLevel1] (node2) at (1.00, 0.00) {$\{v_2\}$};
						\node[nodeLevel1] (node3) at (2.00, 0.00) {$\{v_3\}$};
						\node[nodeLevel1] (node4) at (3.00, 0.00) {$\{v_4\}$};
						\node[nodeLevel1] (node5) at (4.00, 0.00) {$\{v_5\}$};
						
						\node[nodeLevel2] (node12) at (0.50, 1.00) {$\{v_1, v_2\}$};
						\node[nodeLevel2] (node45) at (3.50, 1.00) {$\{v_4, v_5\}$};
						\node[nodeLevel3] (node123) at (1.25, 2.00) {$\{v_1, v_2, v_3\}$};
						\node[nodeLevel4] (node12345) at (2.38, 3.00) {$\{v_1, v_2, v_3, v_4, v_5\}$};
						
						\draw [-] (node12345) to (node123);
						\draw [-] (node12345) to (node45);
						\draw [-] (node123) to (node12);
						\draw [-] (node123) to (node3);
						\draw [-] (node12) to (node1);
						\draw [-] (node12) to (node2);
						\draw [-] (node45) to (node4);
						\draw [-] (node45) to (node5);
					\end{tikzpicture}
				\end{center}
				\caption{Example illustrating the hierarchical clustering: Clustering tree.}
				\label{figure:exampleIllustratingTheHierarchicalClustering:clustering}
			\end{figure}
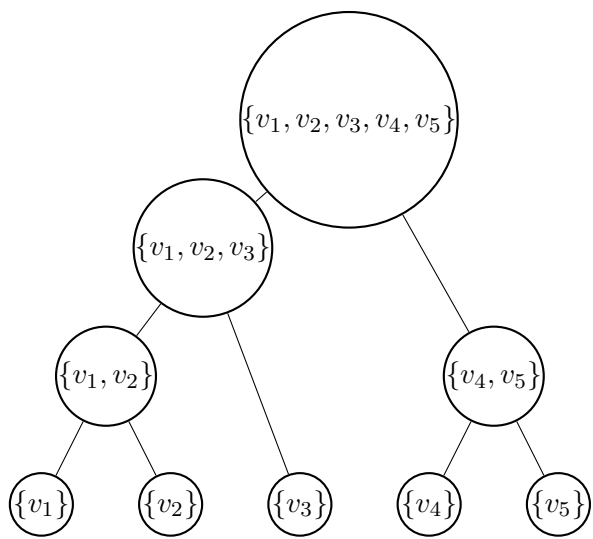

Now, we go through the tree in a bottom-up fashion from the leaves to the root.
In each tree vertex we solve the TSP for the associated cluster,
after both of its child vertices were resolved.
The crucial aspect of our procedure is the following:
All subtour elimination constraints generated during such a TSP solution for a certain cluster
are propagated and added to the ILP model
used for solving the TSP instance of its parent cluster.
Obviously, at the root vertex the full TSP is solved.

The advantage of this strategy is the step-by-step construction
of the violated subtour elimination constraints.
A disadvantage is that many constraints can make sense in the local context
but not in the global one and thus too many constraints could be generated in this way.
Naturally, one pays for the additional subtour elimination constraints
by an increase in computation time required to solve a large number of -- mostly small --
TSP instances.
To avoid the solution of TSPs of the same order of magnitude as the original instance,
it makes sense to impose an upper bound $u$ on the maximum cluster size.
This means that the clustering tree is partitioned into several subtrees by removing
all tree vertices corresponding to clusters of size greater than $u$.
After resolving all these subtrees we collect all generated subtour elimination constraints
and add them to the ILP model for the originally given TSP. This approach will be denoted as $HC \mid u$.
Computational experiments with various choices of $u$
indicated that $u = 4 \frac{n}{\log_2{n}}$ would be a good upper bound.
%(see Section~\ref{subsection:computationalResults})
 	
Let us take a closer look at the problem of including too many subtour elimination constraints which are redundant in the global graph context.
Of course the theoretical ``best'' way would be to check which of the propagated subtour elimination constraints were not used during the runs of the ILP solver and drop them.
To do this, it would be necessary to get this information from the ILP solver
which often is not possible.

However, we can try to approximately identify subtours
which are not only locally relevant in the following way:
All subtour elimination constraints generated in a certain tree vertex, i.e.\ for a certain cluster,
are marked as {\em considered subtour elimination constraints}.
Then we solve the TSP for the cluster of its parent vertex in the tree
without using the subtours marked as {\em considered}.
If we generate such a considered subtour again during the solution
of the parent vertex, we take this as an indicator of global significance
and add the constraint permanently for all following supersets of this cluster.
If we set the upper bound $u$, we take also all subtour elimination constraints found in the biggest solved clusters.
%(i.e.\ all considered subtour elimination constraints in these cases).
This approach will be denoted as $HCD \mid u$.

Of course, it is only a heuristic rule and one can easily find examples, where this prediction on a subtour's relevance fails, but our experiments indicate that $HCD \mid 4 n / \log_2{n}$ is the best approach we considered.
A comparison with other hierarchical clustering methods for all test instances
can be found in  Table~\ref{table:resultsForTheMainApproachAndForDifferentVariantsOfTheApproachWhichUsesTheHierarchicalClustering} in the Appendix.
It can be seen that without an upper bound we are often not able to find the solution at all (under time and memory constraints we made on the computational experiments).
In the third and fourth column we can see a comparison between approaches both using the upper bound $u = 4 \frac{n}{\log_2{n}}$ where the former collects all detected subtour elimination constraints and the latter allows to drop those which seem to be relevant only in a local context.
Both these methods beat \mainApproach\ (for the comparison of this approach with other presented algorithms see the computational experiments in Section~\ref{subsection:computationalResults}).

\section{Computational experiments}
		\label{subsection:computationalResults}

In the following the computational experiments and their results will be discussed.\ifdefined\FULLVERSION\else{} Additional illustrative material can be found in an accompanying technical report~\cite{GeneratingSubtourEliminationConstraintsForTheTSPFromPureIntegerSolutions},
which is an extended version of this paper.\fi{}

\subsection{Setup of the computational experiments}
\label{sec:compsetup}

All tests were run on an {\em Intel(R) Core(TM) i5-3470 CPU @ 3.20GHz with 16 GB RAM}
under {\em Linux}\footnote{precise version: {\em Linux 3.8.0-29-generic {\#}42{\textasciitilde}precise1-Ubuntu SMP x86\_64 x86\_64 x86\_64 GNU/Linux}} and all programs were implemented in {\em C++}\footnote{precise compiler version: {\em gcc version 4.6.3}} by using
the {\em SCIP} MIP-solver~\cite{SCIPSolvingConstraintIntegerPrograms}
together with {\em CPLEX} as LP-solver\footnote{precise version: {\em SCIP version 3.0.1 [precision: 8 byte] [memory: block] [mode: optimized] [LP solver: CPLEX 12.4.0.0] [GitHash: 9ee94b7] Copyright (c) 2002-2013 Konrad-Zuse-Zentrum f\"ur Informationstechnik Berlin (ZIB)}}.
It has often been discussed in the literature
(see e.g.~\cite{EfficientSeparationRoutinesForTheSymmetricTravelingSalesmanProblemIISeparatingMultiHandleInequalities})
and in personal communications that
ILP-solvers are relatively unrobust and often show high variations in their running time performance,
even if the same instance is repeatedly run on the same hardware and same software environment.
Our first test runs also exhibited deviations up to a factor of $2$
when identical tests were repeated.
Thus we took special care to guarantee the relative reproducibility of the computational
experiments:
No additional swap memory was made available during the tests,
only one thread was used and no other parallel user processes were allowed.
This leads to a high degree of reproducibility in our experiments.
However, this issue makes a comparison to other simple approaches,
which were tested on other computers under other hardware and software conditions,
extremely difficult.

\medskip
We used two groups of test instances:
The first group is taken from the well-known TSPLIB95~\cite{TSPLIB95},
which contains the established benchmarks for TSP and related problems.
From the collection of instances we chose all those with
(i) at least $150$ and at most $1000$ vertices and
(ii) which could be solved in at most 12 hours
by our \mainApproach.
It turned out that 25 instances of the TSPLIB95 fall into this category
(see Table~\ref{table:comparisonBetweenDifferentVariantsOfOurApproach}),
the largest having $783$ vertices.

We also observed some drawbacks of these instances:
Most of them (23 of 25) are defined as point sets in the Euclidean plane with distances
corresponding to the Euclidean metric or as a set of geographical cities,
i.e.\ points on a sphere.
Moreover, they often contain substructures like meshes or sets of colinear points and finally,
since all distances are rounded to the nearest integer, there are many instances which have
multiple optimal solutions.
These instances are relatively unstable with respect to solution time, number of iterations,
and -- important for our approach -- cardinality of the set of violated subtour elimination constraints.
For our approach instances with a mesh geometry (e.g.\ {\em ts225} from \mbox{TSPLIB95})
were especially prone to unstable behavior, such as widely varying running times
for minor changes in the parameter setting.
This seems to be due to the fact that these instances contain many 2-matchings
with the same objective function value\ifdefined\FULLVERSION{} as illustrated in the following example: Consider a $3 \times (2 n + 2)$ mesh graph (see Figure~\ref{figure:ExampleIllustratingTheBehaviorOfOurApproachesByInstancesBasedOnGraphContainingMeshSubstructures}, left graph). It has $2^n$ optimal TSP tours (see~\cite{TosicBodroza:OnTheNumberOfHamiltonianCyclesOfP4TimesPn}). If we fix a subtour on the first $6$ vertices, we obviously have $2^{n - 1}$ optimal TSP tours on the remaining $3 \times \big((2 n + 2) - 2\big)$vertices (see Figure~\ref{figure:ExampleIllustratingTheBehaviorOfOurApproachesByInstancesBasedOnGraphContainingMeshSubstructures}, right graph) and together with the fixed subtour we have $2^{n - 1}$ 2-matchings having the same objective value as an optimal TSP tour on the original graph.
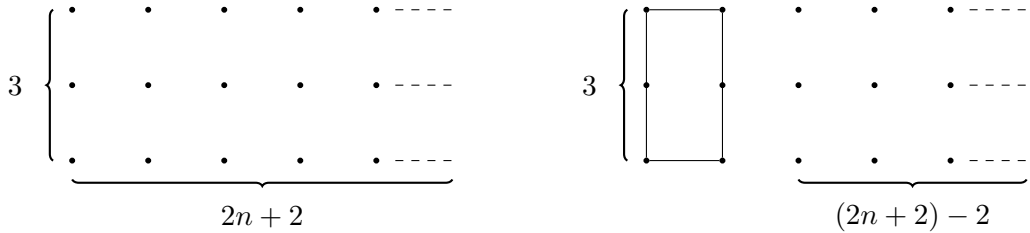
\begin{figure}[htb]
	\begin{center}
		\begin{multicols}{2}
			\begin{tikzpicture}[scale=1]
				\node[circle, draw=black!100, fill=black!100, thick, inner sep=0pt, minimum size=0.5mm] (node0) at (0.00, 0.00) {};
				\node[circle, draw=black!100, fill=black!100, thick, inner sep=0pt, minimum size=0.5mm] (node1) at (1.00, 0.00) {};
				\node[circle, draw=black!100, fill=black!100, thick, inner sep=0pt, minimum size=0.5mm] (node2) at (2.00, 0.00) {};
				\node[circle, draw=black!100, fill=black!100, thick, inner sep=0pt, minimum size=0.5mm] (node4) at (3.00, 0.00) {};
				\node[circle, draw=black!100, fill=black!100, thick, inner sep=0pt, minimum size=0.5mm] (node5) at (4.00, 0.00) {};
				\draw[dashed] (4.25, 0.00) -- (5.00, 0.00);
				\node[circle, draw=black!100, fill=black!100, thick, inner sep=0pt, minimum size=0.5mm] (node6) at (0.00, 1.00) {};
				\node[circle, draw=black!100, fill=black!100, thick, inner sep=0pt, minimum size=0.5mm] (node7) at (1.00, 1.00) {};
				\node[circle, draw=black!100, fill=black!100, thick, inner sep=0pt, minimum size=0.5mm] (node8) at (2.00, 1.00) {};
				\node[circle, draw=black!100, fill=black!100, thick, inner sep=0pt, minimum size=0.5mm] (node9) at (3.00, 1.00) {};
				\node[circle, draw=black!100, fill=black!100, thick, inner sep=0pt, minimum size=0.5mm] (node10) at (4.00, 1.00) {};
				\draw[dashed] (4.25, 1.00) -- (5.00, 1.00);
				\node[circle, draw=black!100, fill=black!100, thick, inner sep=0pt, minimum size=0.5mm] (node11) at (0.00, 2.00) {};
				\node[circle, draw=black!100, fill=black!100, thick, inner sep=0pt, minimum size=0.5mm] (node12) at (1.00, 2.00) {};
				\node[circle, draw=black!100, fill=black!100, thick, inner sep=0pt, minimum size=0.5mm] (node13) at (2.00, 2.00) {};
				\node[circle, draw=black!100, fill=black!100, thick, inner sep=0pt, minimum size=0.5mm] (node14) at (3.00, 2.00) {};
				\node[circle, draw=black!100, fill=black!100, thick, inner sep=0pt, minimum size=0.5mm] (node15) at (4.00, 2.00) {};
				\draw[dashed] (4.25, 2.00) -- (5.00, 2.00);

				\draw [thick, decorate, decoration={brace, mirror}] (0.00, -0.25) -- (5.00, -0.25);
				\node at (2.50, -0.75) {$2 n + 2$};
				\draw [thick, decorate, decoration={brace}] (-0.25, 0.00) -- (-0.25, 2.00);
				\node at (-0.75, 1.00) {$3$};
			\end{tikzpicture}

			\begin{tikzpicture}[scale=1]
				\node[circle, draw=black!100, fill=black!100, thick, inner sep=0pt, minimum size=0.5mm] (node0) at (0.00, 0.00) {};
				\node[circle, draw=black!100, fill=black!100, thick, inner sep=0pt, minimum size=0.5mm] (node1) at (1.00, 0.00) {};
				\node[circle, draw=black!100, fill=black!100, thick, inner sep=0pt, minimum size=0.5mm] (node2) at (2.00, 0.00) {};
				\node[circle, draw=black!100, fill=black!100, thick, inner sep=0pt, minimum size=0.5mm] (node4) at (3.00, 0.00) {};
				\node[circle, draw=black!100, fill=black!100, thick, inner sep=0pt, minimum size=0.5mm] (node5) at (4.00, 0.00) {};
				\draw[dashed] (4.25, 0.00) -- (5.00, 0.00);
				\node[circle, draw=black!100, fill=black!100, thick, inner sep=0pt, minimum size=0.5mm] (node6) at (0.00, 1.00) {};
				\node[circle, draw=black!100, fill=black!100, thick, inner sep=0pt, minimum size=0.5mm] (node7) at (1.00, 1.00) {};
				\node[circle, draw=black!100, fill=black!100, thick, inner sep=0pt, minimum size=0.5mm] (node8) at (2.00, 1.00) {};
				\node[circle, draw=black!100, fill=black!100, thick, inner sep=0pt, minimum size=0.5mm] (node9) at (3.00, 1.00) {};
				\node[circle, draw=black!100, fill=black!100, thick, inner sep=0pt, minimum size=0.5mm] (node10) at (4.00, 1.00) {};
				\draw[dashed] (4.25, 1.00) -- (5.00, 1.00);
				\node[circle, draw=black!100, fill=black!100, thick, inner sep=0pt, minimum size=0.5mm] (node11) at (0.00, 2.00) {};
				\node[circle, draw=black!100, fill=black!100, thick, inner sep=0pt, minimum size=0.5mm] (node12) at (1.00, 2.00) {};
				\node[circle, draw=black!100, fill=black!100, thick, inner sep=0pt, minimum size=0.5mm] (node13) at (2.00, 2.00) {};
				\node[circle, draw=black!100, fill=black!100, thick, inner sep=0pt, minimum size=0.5mm] (node14) at (3.00, 2.00) {};
				\node[circle, draw=black!100, fill=black!100, thick, inner sep=0pt, minimum size=0.5mm] (node15) at (4.00, 2.00) {};
				\draw[dashed] (4.25, 2.00) -- (5.00, 2.00);

				\draw [thick, decorate, decoration={brace, mirror}] (2.00, -0.25) -- (5.00, -0.25);
				\node at (3.50, -0.75) {$(2 n + 2) - 2$};
				\draw [thick, decorate, decoration={brace}] (-0.25, 0.00) -- (-0.25, 2.00);
				\node at (-0.75, 1.00) {$3$};
				
				\draw (node0) -- (node1) -- (node12) -- (node11) -- (node0);
			\end{tikzpicture}
		\end{multicols}
	\end{center}
	\caption{Example illustrating the behavior of our approaches by instances based on graphs containing mesh substructures. Distances between every two vertices correspond to their respective Euclidean distances in this example.}
	\label{figure:ExampleIllustratingTheBehaviorOfOurApproachesByInstancesBasedOnGraphContainingMeshSubstructures}
\end{figure}
Thus\else{}, and thus\fi{} the search process for a feasible TSP tour can vary widely\ifdefined\FULLVERSION\else{} 
(see \cite{GeneratingSubtourEliminationConstraintsForTheTSPFromPureIntegerSolutions}
for more details)\fi{}.

In order to provide further comparisons, we also defined a set of instances based on
{\em random Euclidean graphs}:
In a unit square $[0, 1]^2$ we chose $n$ uniformly distributed points and defined the distance between every two vertices %$d_e$ for all $e = (u, v)$
as their respective Euclidean distance\footnote{We represented all distances as integers
by scaling with $2^{14}$ and rounding to the nearest integer.}.
These {\em random Euclidean instances} eliminate the potential influence of substructures and always have only one unique optimal solution in all stages of the solving process.
We created 40 such instances named {\em RE\_$X$\_$n$} where $n \in \{150, 200, 250, \dots, 500\}$
indicates the number of vertices and $X \in \{A, B, C, D, E\}$.

\medskip
The running times of our test instances, most of them containing between 150 and 500 vertices,
were often within several hours.
Since we tested many different variants and configurations of our
approach, we selected a subset of these test instances to get faster answers for
determining the best algorithm settings for use in the final tests.
This subset contains 12 (of the 25) TSPLIB instances
and one random instances for every number of vertices $n$ 
(see e.g.\ Table~\ref{table:comparisonOfTheBehaviourOfTheAlgorithmIfUsingDifferentRepresentationsOfSubtourConstraints}.)
%\ref{table:comparisonOfTheBehaviourOfTheAlgorithmInTheCaseOfUsingAllConstraintsGeneratedFromAllFeasibleSolutionsFoundDuringTheSolvingProcessAndInTheCaseOfUsingOnlyTheConstraintsGeneratedFromTheFinalILP}, %\ref{table:comparisonOfTheBehaviourOfTheRestrictedClusteringForTheMinimumClusterSizeOf34And5} and %\ref{table:resultsForTheMainIdeaOfOurApproachUsedWithoutWithTheLinKernighanHeuristicForGeneratingAnInitialSolution}).

\medskip
All our running time tables report the name of the instance,
the running time ({\bf sec.}) in wall-clock seconds (rounded down to nearest integers), the number of iterations ({\bf \#i.}),
i.e.\ the number of calls to the ILP-solver in the main part of our algorithm
(without the TSP solutions for the clusters)
and the number of subtour elimination constraints ({\bf \#c.}) added to the ILP model
in the last iteration, i.e.\ the number of constraints of the model which
yielded an optimal TSP solution.
We often compare two columns of a table by taking the
{\bf mean ratio}, i.e.\
computing the quotient between the running times on the same instance
and taking the arithmetic mean of these quotients.

\subsection{Computational details for selected examples}
\label{sec:compexamp}

Let us now take a closer look at two instances in detail.
While this serves only as an illustration,
we studied lots of these special case scenarios visually
during the development of the clustering approach
to gain a better insight into the structure of subtours generated by \mainApproach.

We selected instances {\em kroB150} and {\em u159} whose vertices are
depicted in Figures~\ref{figure:instance:kroB150} and~\ref{figure:instance:u159} in the Appendix.
Both instances consist of points in the Euclidean plane and the distances between every two vertices  correspond to their respective Euclidean distances, however, they represent two very different instance types: 
The instance {\em kroB150} consists of relatively uniformly distributed points, the instance {\em u159} is more structured and it contains e.g. mesh substructures which are the worst setting for our algorithm (recall Subsection~\ref{sec:compsetup}).

Figure~\ref{figure:progressOfTheAmountOfTheComputationTimeTInSecondsOnTheParameterC}
in the Appendix illustrates the behavior of the running time $t$ in seconds
as a function of the parameter $c$ for the instances {\em kroB150} and {\em u159}.
The full lines correspond to standard clustering approach $C \mid c$ described
in Section~\ref{sec:cluster} (see Algorithm~\ref{algorithm:clusteringAlgorithm}),
while the dashed line corresponds to the restricted clustering $RC_3 \mid c$ 
of Section~\ref{sec:cluster2} with minimum cluster size $3$.
The standard \mainApproach\ without clustering arises for $c=1$.

Instance {\em kroB150} consists of relatively uniformly distributed points in the Euclidean plane, but has a specific property:
By using Algorithm~\ref{algorithm:clusteringAlgorithm} we can observe the occurrence of two
main components also for relatively small coefficient $c$ (already for $c = 6$).
This behavior is rather atypical for random Euclidean graphs, 
cf.~\cite[ch.~13]{RandomGeometricGraphs},
but it provides an advantage for our approach since we do not have to solve cluster instances of the same order of magnitude as the original graph
but have several clusters of moderate size also for small cluster numbers $c$.

Considering the standard clustering approach (Algorithm~\ref{algorithm:clusteringAlgorithm}) in Figure~\ref{figure:progressOfTheAmountOfTheComputationTimeTInSecondsOnTheParameterC}, upper graph,
it can be seen that only a small improvement occurs for $c$ between $2$ and $5$.
Looking at the corresponding clusterings in detail,
it turns out in these cases that there exists only one ``giant connected component''
and all other clusters have size $1$. 
This structure also implies that for the restricted clustering these isolated vertices are merged with the giant component and the effect of clustering is lost completely.
For larger cluster numbers $c$, a considerable speedup is obtained, with some variation, but more or less in the same range for almost all values of $c\geq 6$ (in fact, the giant component splits in these cases). Moreover, the restricted clustering performs roughly as good as the standard clustering for $c \geq 6$.

Instance {\em u159} is much more structured and has many colinear vertices.
Here, we can observe a different behavior.
While the standard clustering is actually beaten by \mainApproach\
for smaller cluster numbers and has a more or less similar performance for
larger cluster numbers,
the restricted clustering is almost consistently better than the other two approaches. For $c$ between $2$ and $10$ there exists a large component containing many mesh substructures which consumes as much computation time as the whole instance.
% (for more discussion about mesh substructures see Subsection~\ref{sec:compsetup}).

These two instances give some indication of how to characterize ``good'' instances for our algorithm: They should

\vspace*{-3mm}
\begin{itemize}
\setlength\itemsep{-2mm}
	\item consist of more clearly separated clusters and
	\item not contain mesh substructures and colinear vertices.
	%(as mentioned above, we have to include many subtour elimination constraints in these cases).
\end{itemize}

\subsection{General computational results}
\label{sec:compres}

A summary of the computational results for \mainApproach\ and the most promising variants of clustering based subtour generations can be found in Table~\ref{table:comparisonBetweenDifferentVariantsOfOurApproach}.
For random Euclidean instances we report only the mean values of all five instances
of the same size\ifdefined\FULLVERSION\else{} (detailed results for all random Euclidean instances can be found in our accompanying technical report~\cite{GeneratingSubtourEliminationConstraintsForTheTSPFromPureIntegerSolutions})\fi{}.
It turns out that $HCD \mid 4 \frac{n}{\log_2{n}}$, i.e.\ the hierarchical clustering approach combined with dropping subtour elimination constraints
and fixing them only if they are generated again in the subsequent iteration and with the upper bound on the maximum cluster size $u = 4 \frac{n}{\log_2{n}}$,
gives the best overall performance.
A different behavior can be observed
for instances taken from the TSPLIB and for random Euclidean instances.
On the TSPLIB instances this algorithm $HCD \mid 4 \frac{n}{\log_2{n}}$
is on average about $20\%$ faster than pure \mainApproach\ and beats the other clustering based approaches for most instances.
In those cases, where it is not the best choice, it is usually not far behind.

As already mentioned, best results are obtained with $HCD \mid 4 \frac{n}{\log_2{n}}$ for instances with a strong cluster structure and without mesh substructures (e.g.~{\em pr299}).
For instances with mesh substructures it is difficult to find an optimal 2-matching which is also a TSP tour. % (see also Subsection~\ref{sec:compsetup}).
For random Euclidean instances the results are less clear
but approaches with fixed number of clusters seem to be better then the hierarchical ones.

\medskip
It was a main goal of this study
to find a large number of ``good'' subtour elimination constraints, i.e.\ subtours that are present in the last iteration of the ILP-model of \mainApproach.
Therefore, we show the potentials and limitations of our approach in reaching this goal.
In particular, we will report the relation between the set $S_1$ consisting of all subtours generated by running a hierarchical clustering algorithm with an upper bound $u$ (set as in the computational tests to $u = 4 \frac{n}{\log_2{n}}$) before solving the original problem (i.e.\ the root vertex) and the set $S_2$ containing only the subtour elimination constraints included in the final ILP model of \mainApproach.
We tested the hierarchical clustering with and without the dropping of non-repeated subtours.

%		\addtolength{\tabcolsep}{+2pt}

		\begin{table}[!ht]
			\begin{center}
				\begin{tabular}{l||cc|cc}
					{\bf instance} & \multicolumn{2}{c|}{\bf \boldmath $HC \mid 4 \frac{n}{\log_2{n}}$ \unboldmath} & \multicolumn{2}{c}{\bf \boldmath $HCD \mid 4 \frac{n}{\log_2{n}}$ \unboldmath}\\
					& {\bf \boldmath $p_{used}$ \unboldmath} & {\bf \boldmath $p_{cov}$ \unboldmath} & {\bf \boldmath $p_{used}$ \unboldmath} & {\bf \boldmath $p_{cov}$ \unboldmath}\\
					\hline\hline
					kroA150
						& 0.262712
							& 0.455882
								& 0.476190
									& 0.367647\\
					
					kroB150
						& 0.222222
							& 0.351351
								& 0.396040
									& 0.270270\\
					
					u159
						& 0.085271
							& 0.448980
								& 0.153226
									& 0.387755\\
					
					brg180
						& 0.133929
							& 0.145631
								& 0.714286
									& 0.145631\\
					
					kroA200
						& 0.209713
							& 0.324895
								& 0.450704
									& 0.270042\\
					
					kroB200
						& 0.206612
							& 0.413223
								& 0.423423
									& 0.388430\\
					
					tsp225
						& 0.134752
							& 0.218391
								& 0.297143
									& 0.199234\\
					
					a280
						& 0.064935
							& 0.314685
								& 0.161943
									& 0.279720\\
					
					lin318
						& 0.234589
							& 0.383754
								& 0.440273
									& 0.361345\\
					
					gr431
						& 0.073701
							& 0.209713
								& 0.221053
									& 0.185430\\
					
					pcb442
						& 0.056759
							& 0.151697
								& 0.133117
									& 0.163673\\
					
					gr666
						& 0.076048
							& 0.271229
								& 0.220379
									& 0.235741\\
					
					\hline
					
					{\em mean}
						& {\em 0.146770}
							& {\em 0.307453}
								& {\em 0.340648}
									& {\em 0.271243}\\
					
					\hline\hline
					
					RE\_A\_150
						& 0.179191
							& 0.310000
								& 0.289157
									& 0.240000\\
					
					RE\_A\_200
						& 0.122642
							& 0.239264
								& 0.212329
									& 0.190184\\
					
					RE\_A\_250
						& 0.120773
							& 0.268817
								& 0.172727
									& 0.204301\\
					
					RE\_A\_300
						& 0.191235
							& 0.325424
								& 0.331915
									& 0.264407\\
					
					RE\_A\_350
						& 0.151274
							& 0.376984
								& 0.285714
									& 0.333333\\
					
					RE\_A\_400
						& 0.170455
							& 0.297357
								& 0.254157
									& 0.235683\\
					
					RE\_A\_450
						& 0.148148
							& 0.415771
								& 0.311178
									& 0.369176\\
					
					RE\_A\_500
						& 0.165485
							& 0.321101
								& 0.276596
									& 0.268349\\
					
					\hline
					
					{\em mean}
						& {\em 0.156150}
							& {\em 0.319340}
								& {\em 0.266722}
									& {\em 0.263179}\\
					
					\hline\hline
	
					{\em mean of all}
						& {\em 0.150522}
							& {\em 0.312207}
								& {\em 0.311078}
									& {\em 0.268018}\\
					
				\end{tabular}
			\end{center}
			\caption{Proportion of used and proportion of covered subtours for our hierarchical clustering approaches with the upper bound $u = 4 \frac{n}{\log_2{n}}$ which (i) does not allow ($HC \mid 4 \frac{n}{\log_2{n}}$) and which (ii) does allow ($HCD \mid 4 \frac{n}{\log_2{n}}$) to drop the unused subtour elimination constraints.}
			 \label{table:proportionOfUsedAndProportionOfCoveredSubtoursForOurHierarchicalClusteringApproaches}
		\end{table}
		
%		\addtolength{\tabcolsep}{-2pt}

There are two aspects we want to describe:
At first, we want to check whether $S_1$ contains a relevant proportion of ``useful''
subtour contraints, i.e.\ constraints also included in $S_2$,
or whether $S_1$ contains ``mostly useless'' subtours.
Therefore, we report the {\em proportion of used subtours} defined as
\begin{equation}
\label{equation:proportionOfUsedSubtourConstraints}
					p_{used} \defeq \frac{\left|S_1\cap S_2\right|}{\left|S_1\right|}.
				\end{equation}
Secondly, we want to find out to what extend it is possible to find the
``right'' subtours by our approach.
Hence, we define the {\em proportion of covered subtours} defined as
\begin{equation}
\label{equation:proportionOfCoveredSubtourConstraints}
					p_{cov} \defeq \frac{\left|S_1 \cap S_2\right|}{\left|S_2\right|}.
				\end{equation}
The values of $p_{used}$ and $p_{cov}$ are given in
Table~\ref{table:proportionOfUsedAndProportionOfCoveredSubtoursForOurHierarchicalClusteringApproaches}.
It can be seen that empirically there is the chance to find about 26--31\% ($p_{cov}$) of all
required violated subtour elimination constraints. 
If subtour elimination constraints are allowed to be dropped, we are able to find fewer such constraints, but our choice has a better quality ($p_{cov}$ is smaller, but $p_{used}$ is larger),
i.e.\ the solver does not have to work with a large number of constraints which only slow down the solving process and are not necessary to reach an optimal solution.

Furthermore, we can observe a relative big difference between the values of the proportion of used subtour elimination constraints ($p_{used}$) for the TSPLIB instances and for random Euclidean instances if the dropping of redundant constraints is allowed.

\subsection{Adding a starting heuristic}
\label{sec:compstart}

Of course, there are many possibilities of adding improvements to our basic approach.
Lower bounds and heuristics can be introduced, branching rules can be specified, or cutting planes can be generated.
We did not pursue these possibilities since we want to focus on the simplicity of the approach.
Moreover, we wanted to take the ILP solver as a ``black box'' and not interfere with its execution.

Just as an example  which immediately comes to mind,
we added a starting heuristic to give a reasonably good TSP solution as a starting solution
to the ILP solver.
We used the improved version of the classical Lin-Kernighan heuristic
in the code written by Helsgaun~\cite{LKH}.
The computational results reported in  Table~\ref{table:resultsForTheMainApproachUsedWithoutWithTheLinKernighanHeuristicForGeneratingAnInitialSolution}
show that a considerable speedup (roughly a factor of 3, but also much more)
can be obtained in this way.
		
		\begin{table}[!ht]
			\begin{center}
				\begin{tabular}{l||rrr|rrr}
					{\bf instance} & \multicolumn{3}{c|}{\bf without starting} & \multicolumn{3}{c}{\bf with starting} \\ & \multicolumn{3}{c|}{\bf heuristic} & \multicolumn{3}{c}{\bf heuristic}\\
					& {\bf sec.} & {\bf \#i.} & {\bf \#c.} & {\bf sec.} & {\bf \#i.} & {\bf \#c.}\\
					\hline\hline
					kroA150
						& 19
							& 7
								& 136
									& {\bf 16}
										& {\bf 10}
											& {\bf 34}\\
					
					kroB150
						& 179
							& 8
								& 148
									& {\bf 17}
										& {\bf 8}
											& {\bf 104}\\
					
					u159
						& 6
							& 4
								& 49
									& {\bf 4}
										& {\bf 5}
											& {\bf 40}\\
					
					brg180
						& 44
							& 4
								& 103
									& {\bf 0}
										& {\bf 2}
											& {\bf 15}\\
					
					kroA200
						& 677
							& 8
								& 237
									& {\bf 42}
										& {\bf 8}
											& {\bf 135}\\
					
					kroB200
						& 31
							& 5
								& 121
									& {\bf 28}
										& {\bf 6}
											& {\bf 124}\\
					
					tsp225
						& 178
							& 9
								& 261
									& {\bf 73}
										& {\bf 13}
											& {\bf 176}\\
					
					a280
						& 157
							& 11
								& 143
									& {\bf 32}
										& {\bf 8}
											& {\bf 58}\\
					
					lin318
						& 6885
							& 8
								& 357
									& {\bf 4941}
										& {\bf 8}
											& {\bf 259}\\
					
					gr431
						& 2239
							& 9
								& 453
									& {\bf 838}
										& {\bf 10}
											& {\bf 318}\\
					
					pcb442
						& 2737
							& 11
								& 501
									& {\bf 447}
										& {\bf 18}
											& {\bf 207}\\
					
					gr666
						& 17711
							& 8
								& 789
									& {\bf 13225}
										& {\bf 11}
											& {\bf 485}\\
					
					\hline
					
					{\em mean ratio (sec.)}
						& \multicolumn{6}{c}{\em 0.432074}\\
					
					\hline\hline
					
					RE\_A\_150
						& 23
							& 8
								& 100
									& {\bf 14}
										& {\bf 11}
											& {\bf 65}\\
					
					RE\_A\_200
						& 72
							& 7
								& 163
									& {\bf 38}
										& {\bf 11}
											& {\bf 99}\\
					
					RE\_A\_250
						& 138
							& 9
								& 186
									& {\bf 63}
										& {\bf 9}
											& {\bf 124}\\
					
					RE\_A\_300
						& 866
							& 6
								& 295
									& {\bf 146}
										& {\bf 8}
											& {\bf 173}\\
					
					RE\_A\_350
						& 411
							& 5
								& 252
									& {\bf 126}
										& {\bf 6}
											& {\bf 151}\\
					
					RE\_A\_400
						& 8456
							& 8
								& 454
									& {\bf 1274}
										& {\bf 6}
											& {\bf 251}\\
					
					RE\_A\_450
						& 2107
							& 5
								& 279
									& {\bf 482}
										& {\bf 7}
											& {\bf 197}\\
					
					RE\_A\_500
						& 15330
							& 6
								& 436
									& {\bf 1997}
										& {\bf 9}
											& {\bf 241}\\
					
					\hline
					
					{\em mean ratio (sec.)}
						& \multicolumn{6}{c}{\em 0.322231}\\
					
					\hline\hline
					
					{\em mean ratio all}
						& \multicolumn{6}{c}{\em 0.388137}\\
					
				\end{tabular}
			\end{center}
	\caption{Results for \mainApproach\ used without / with the Lin-Kernighan heuristic for generating an initial solution. {\em Mean ratios} refer to the arithmetic
 means over ratios between the running times of the approaches. ``sec.'' is the time in seconds, ``\#i.'' the number of iterations and ``\#c.'' the number of subtour elimination constraints
added to the ILP before starting the last iteration.}
\label{table:resultsForTheMainApproachUsedWithoutWithTheLinKernighanHeuristicForGeneratingAnInitialSolution}
		\end{table}

\section{Some theoretical results and further empirical observations}
\label{section:someTheoreticalResultsAndFurtherEmpiricalObservations}

Although our work mainly aims at computational experiments, we also tried to analyze \mainApproach\ from a theoretical point of view. 
In particular we studied the expected behavior on random Euclidean instances and tried to
characterize the expected cardinality of the minimal set of required subtours ${\cal S^*}$
as defined in Section~\ref{section:newApproachToSolveTheTSPToOptimality}.
It is well known that no polynomially bounded representation of the TSP polytope can be found
and there also exist instances based on a mesh-structure 
for which $\mathbb{E}\left[\left|S^*\right|\right]$
has exponential size, but the question for the expected size of $\left|S^*\right|$ for random Euclidean instances
and thus for the expected number of iterations of our solution algorithm
remains an interesting open problem.

%In order to find some empirical evidence to these questions,
We started with extensive computational tests, some of them presented in 
Figures~\ref{figure:meanNumberOfIterationsMeanLengthOfAnOptimalTSPTourAndMeanLengthOfAnOptimalWeighted2Matching} and \ref{figure:meanNumberOfSubtoursDuringTheMainApproachInRandomEuclideanInstances}
in the Appendix,
to gain empirical evidence on this aspect.
The upper graph in Figure~\ref{figure:meanNumberOfIterationsMeanLengthOfAnOptimalTSPTourAndMeanLengthOfAnOptimalWeighted2Matching} illustrates the mean number of iterations needed by \mainApproach\
to reach optimality for different numbers of vertices $n$
(we evaluated $100$ random Euclidean instances for every value $n$).
The lower graph of Figure~\ref{figure:meanNumberOfIterationsMeanLengthOfAnOptimalTSPTourAndMeanLengthOfAnOptimalWeighted2Matching} shows the mean length of the optimal TSP tour and of the optimal $2$-matching
(i.e.\ the objective value after solving the ILP in the first iteration) by using the same setting.

It was proven back in 1959 that the expected length of an optimal TSP tour is asymptotically
$\beta \sqrt{n}$, where $\beta$ is a constant~\cite{TheShortestPathThroughManyPoints}.
This approach was later generalized for other settings and
other properties of the  square root asymptotic were identified \cite{AMatchingProblemAndSubadditiveEuclideanFunctionals, ProbabilityTheoryOfClassicalEuclideanOptimizationProblems}.
We used these properties to prove the square root asymptotic also for the $2$-matching problem
(cf.\ Figure~\ref{figure:meanNumberOfIterationsMeanLengthOfAnOptimalTSPTourAndMeanLengthOfAnOptimalWeighted2Matching}, lower graph, dashed).
%\ifdefined\FULLVERSION\else{} In particular, we proved the following theorem (for the proof see our accompanying technical report~\cite{GeneratingSubtourEliminationConstraintsForTheTSPFromPureIntegerSolutions})\fi.

\ifdefined\FULLVERSION

We need some definitions definitions, lemmas and theorems originally introduced by~\cite{AMatchingProblemAndSubadditiveEuclideanFunctionals} and summarized by~\cite{ProbabilityTheoryOfClassicalEuclideanOptimizationProblems} first in order to prove this result.

\begin{definition}[(2-)matching functional and boundary (2-)matching functional]
	\label{definition:2MatchingFunctionalAndBoundary2MatchingFunctional}

		$ $
	
	Let $\mathfrak{F} \defeq \mathfrak{F}(\mbox{\it dim})$ denote the {\em finite subsets of $\rz^{\mbox{\it dim}}$} and let let $\mathfrak{R} \defeq \mathfrak{R}(\mbox{\it dim})$ denote the {\em $\mbox{\it dim}$-dimensional rectangles of $\rz^{\mbox{\it dim}}$}.
	
	Furthermore, let $F \in \mathfrak{F}$ be a point set in $\rz^{\mbox{\it dim}}$ and let $R \in \mathfrak{R}$ be a $\mbox{\it dim}$-dimensional rectangle in $\rz^{\mbox{\it dim}}$ where $\mbox{\it dim} \geq 2$.
	
	And finally, let $d \colon R \times R \to \rz_0^+$ be a metric and let  $G = G(F, R) = \big(V(G), E(G)\big)$ be a complete graph with the vertex set $V(G) = F \cap R$ and with the distances $d(e)$ between every two vertices $u, v \in V(G)$ where $e = (u, v)$.
	
	Then we will denote
	\begin{equation}
		\label{equation:definition:2MatchingFunctionalAndBoundary2MatchingFunctional:MatchingFunctional}
		M(F, R) \defeq M(F \cap R) \defeq \min_{m}{\{OV(G, m) | m \text{ is a matching in } G\}}
	\end{equation}
	the {\em matching functional}.
	
	Furthermore, we will denote
	\begin{equation}
		\label{equation:definition:2MatchingFunctionalAndBoundary2MatchingFunctional:boundaryMatchingFunctional}
		M_B(F, R) \defeq M_B(F \cap R) \defeq \min{\left\{M(F, R), \inf_{\substack{{(F_i)_{i \geq 1} \in \mathcal{F}}\\{\{a_i, b_i\}_{i \geq 1} \in \mathcal{\partial R}}}}{\left\{\sum_i{M(F_i \cup \{a_i, b_i\}, R)}\right\}} \right\}}
	\end{equation}
	the {\em boundary matching functional}. $\mathcal{F}$ stays for the set of all partitions $(F_i)_{i \geq 1}$ of $F$ and $\mathcal{\partial R}$ stays for the set of all sequences of pairs of points $\{a_i, b_i\}_{i \geq 1}$ belonging to the {\em boundary of the rectangle $R$} denoted by $\partial R$. Additionally, we set $d(a, b) = 0$ for all $a, b \in \partial R$.
	
	Similarly, we define the {\em 2-matching functional} $L$ and the {\em boundary 2-matching functional} $L_B$.
	\begin{equation}
		\label{equation:definition:2MatchingFunctionalAndBoundary2MatchingFunctional:2MatchingFunctional}
		L(F, R) \defeq L(F \cap R) \defeq \min_{x}{\{OV(G, x) | x \text{ is a 2-matching in } G)\}}
	\end{equation}
	\begin{equation}
		\label{equation:definition:2MatchingFunctionalAndBoundary2MatchingFunctional:boundary2MatchingFunctional}
		L_B(F, R) \defeq L_B(F \cap R) \defeq \min{\left\{L(F, R), \inf_{\substack{{(F_i)_{i \geq 1} \in \mathcal{F}}\\{\{a_i, b_i\}_{i \geq 1} \in \mathcal{\partial R}}}}{\left\{\sum_i{L(F_i \cup \{a_i, b_i\}, R)}\right\}} \right\}}
	\end{equation}
	
	If it is obvious which rectangle $R$ is considered, we also write $M(F)$ for $M(F, R)$, $M_B(F)$ for $M_B(F, R)$, $L(F)$ for $L(F, R)$ and $L_B(F)$ for $L_B(F, R)$.
	
	Finally, we define $L(F, R) = L_B(F, R) = 0$ if $|F \cap R| < 3$
\end{definition}

\begin{definition}[simple subadditivity, geometric subadditivity and superadditivity]
	\label{definition:simpleSubadditivityGeometricSubadditivityAndSuperAdditivity}
	Let $R$ be a rectangle defined as $[0, t]^{\mbox{\it dim}}$ for some positive constant $t \in \rz^+$ partitioned into two rectangles $R_1$ and $R_1$ ($R_1 \cup R_2 = R$). Furthermore, let $F, G$ be finite sets in $[0, t]^{\mbox{\it dim}}$ and let $P(F, R) \colon \mathfrak{F} \times \mathfrak{R} \to \rz_0^+$ be a function.
	
	The function $P$ is {\em simple subadditive} if the following inequality is satisfied:
	\begin{equation}
		\label{equation:definition:simpleSubadditivityGeometricSubadditivityAndSuperAdditivity:simpleSubadditivity}
		P(F \cup G, R) \leq P(F, R) + P(G, R) + C_1 t
	\end{equation}
	where $C_1 \defeq C_1(\mbox{\it dim})$ is a finite constant.
	
	If
	\begin{equation}
		\label{equation:definition:simpleSubadditivityGeometricSubadditivityAndSuperAdditivity:geometricSubadditivity}
		P(F, R) \leq P(F, R_1) + P(F, R_2) + C_2 \mbox{\it diam}(R)
	\end{equation}
	is fulfilled, we will call the function $P$ {\em geometric subadditive}. $\mbox{\it diam}(R)$ denotes the diameter of the rectangle $R$ and $C_2 \defeq C_2(\mbox{\it dim})$ is a finite constant.
	
	If
	\begin{equation}
		\label{equation:definition:simpleSubadditivityGeometricSubadditivityAndSuperAdditivity:superadditivity}
		P(F, R) \geq P(F, R_1) + P(F, R_2)
	\end{equation}
	is satisfied, we will call the function $P$ {\em superadditive}.
\end{definition}

\begin{definition}[subadditive and superadditive Euclidean functional]
	\label{definition:subadditiveAndSuperadditiveEuclideanFunctional}
	Let
	
	\noindent $P(F, R) \colon \mathfrak{F} \times \mathfrak{R} \to \rz_0^+$ be a function satisfying
	\begin{equation}
		\label{equation:definition:subadditiveAndSuperadditiveEuclideanFunctional:condition1}
		\forall R \in \mathfrak{R} \text{, } P(\emptyset, R) = 0 \ \mbox{,}
	\end{equation}
	\begin{equation}
		\label{equation:definition:subadditiveAndSuperadditiveEuclideanFunctional:condition2}
		\forall y \in \rz^{\mbox{\it dim}} \text{, } R \in \mathfrak{R} \text{, } F \subset R\colon P(F, R) = P(F + y, R + y) \ \mbox{,}
	\end{equation}
	\begin{equation}
		\label{equation:definition:subadditiveAndSuperadditiveEuclideanFunctional:condition3}
		\forall \alpha > 0 \text{, } R \in \mathfrak{R} \text{, } F \subset R\colon P(\alpha F, \alpha R) = \alpha P(F, R) \ \mbox{.}
	\end{equation}
	Then we will say that $P$ is {\em translation invariant} (condition~(\ref{equation:definition:subadditiveAndSuperadditiveEuclideanFunctional:condition2})) and {\em homogeneous} (condition~(\ref{equation:definition:subadditiveAndSuperadditiveEuclideanFunctional:condition3})).
	
	We will call $P$ an {\em Euclidean functional}.
	
	Let $R$ be a rectangle defined as $[0, t]^{\mbox{\it dim}}$ for some positive constant $t \in \rz^+$ partitioned into two rectangles $R_1$ and $R_1$ ($R_1 \cup R_2 = R$). If $P$ satisfies the geometric subadditivity (\ref{equation:definition:simpleSubadditivityGeometricSubadditivityAndSuperAdditivity:geometricSubadditivity}), we will say that $P$ is a {\em subadditive Euclidean functional}. If $P$ is supperadditive (\ref{equation:definition:simpleSubadditivityGeometricSubadditivityAndSuperAdditivity:superadditivity}), we will say that $P$ is a {\em superadditive Euclidean functional}.
\end{definition}

	\begin{lemma}[\cite{ProbabilityTheoryOfClassicalEuclideanOptimizationProblems}, originally \cite{AMatchingProblemAndSubadditiveEuclideanFunctionals}]
	\label{lemma:theMatchingFunctionalMAndTheBoundaryMatchingFunctionalMBAreSubadditiveEuclideanFunctionals}
	The matching functional $M$ and the boundary matching functional $M_B$ are subadditive Euclidean functionals.
	\begin{proof}
		See~\cite{ProbabilityTheoryOfClassicalEuclideanOptimizationProblems}.
	\end{proof}
\end{lemma}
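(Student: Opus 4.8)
The plan is to establish the two defining requirements of Definition~\ref{definition:subadditiveAndSuperadditiveEuclideanFunctional}: that $M$ and $M_B$ are Euclidean functionals, and that each satisfies geometric subadditivity~(\ref{equation:definition:simpleSubadditivityGeometricSubadditivityAndSuperAdditivity:geometricSubadditivity}). First I would dispose of the Euclidean functional axioms, which are immediate consequences of the metric structure. Condition~(\ref{equation:definition:subadditiveAndSuperadditiveEuclideanFunctional:condition1}) holds because the empty point set admits only the empty matching, of cost $0$. Translation invariance~(\ref{equation:definition:subadditiveAndSuperadditiveEuclideanFunctional:condition2}) and homogeneity~(\ref{equation:definition:subadditiveAndSuperadditiveEuclideanFunctional:condition3}) follow because translating a point configuration leaves every pairwise distance unchanged, whereas scaling by $\alpha > 0$ multiplies each distance by $\alpha$; since $OV(G, m)$ is a sum of edge distances, the optimal matching value transforms in exactly the same way. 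The same argument applies to $M_B$, noting that boundary points contribute distance $0$ and are left invariant under both operations.

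The heart of the proof is geometric subadditivity. For $M$, partition $R = R_1 \cup R_2$ and let $m_1$ and $m_2$ be optimal (near-perfect) matchings on $F \cap R_1$ and $F \cap R_2$, of total cost $M(F, R_1) + M(F, R_2)$. Their union is a matching on $F \cap R$ that fails to cover at most one vertex from each part. Adding a single edge between the (at most two) uncovered vertices produces an admissible matching on $F \cap R$ at extra cost at most $\mbox{\it diam}(R)$, so $M(F, R) \le M(F, R_1) + M(F, R_2) + C_2\,\mbox{\it diam}(R)$ with $C_2 = 1$, which is exactly~(\ref{equation:definition:simpleSubadditivityGeometricSubadditivityAndSuperAdditivity:geometricSubadditivity}).

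For $M_B$ I would again build a feasible configuration for $R$ out of the optimal boundary configurations on $R_1$ and $R_2$. Every point that those configurations match to the part of $\partial R_1 \cup \partial R_2$ lying on $\partial R$ remains admissible for $R$; the difficulty is the auxiliary boundary points placed on the internal cut $H \defeq \partial R_1 \cap \partial R_2$, which is interior to $R$ and hence not a legitimate boundary for $R$. I would repair these by pairing up the points routed to $H$ from the two sides, reconnecting each pair by an edge, and then argue that the resulting object is a genuine boundary configuration for $R$ whose repair cost is at most $C_2\,\mbox{\it diam}(R)$. Combined with the trivial bound $M_B(F, R) \le M(F, R)$, this yields geometric subadditivity of $M_B$.

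The step I expect to be the main obstacle is precisely this last bound on the repair cost for $M_B$: a priori an optimal boundary configuration may route arbitrarily many points to the cut $H$, so one cannot simply charge each reconnection $\mbox{\it diam}(R)$ independently. The resolution exploits that a point is matched to $H$ in an optimal configuration only when this is cheaper than matching it inside its own part, which forces such points to lie close to $H$; pairing nearest such points across the cut, and invoking the alternative of sending them to the true boundary $\partial R$ at zero cost, keeps the number and length of genuinely expensive reconnections controlled by a dimension-dependent constant. The full bookkeeping is exactly the one carried out for matching functionals in~\cite{ProbabilityTheoryOfClassicalEuclideanOptimizationProblems, AMatchingProblemAndSubadditiveEuclideanFunctionals}, which I would follow.
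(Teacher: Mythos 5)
Your treatment of $M$ is fine: conditions (\ref{equation:definition:subadditiveAndSuperadditiveEuclideanFunctional:condition1})--(\ref{equation:definition:subadditiveAndSuperadditiveEuclideanFunctional:condition3}) are immediate, and the union-of-matchings argument with one extra edge of length at most $\mbox{\it diam}(R)$ gives geometric subadditivity of $M$ with $C_2=1$. The genuine gap is exactly at the step you flag for $M_B$, and it is not just bookkeeping: the repair at the internal cut $H=\partial R_1\cap\partial R_2$ cannot be done at cost $O\big(\mbox{\it diam}(R)\big)$. If $k$ points are matched to $H$, the cheapest generic repair re-matches them in pairs guided by a minimal matching of their attachment points inside $H$, a $(\mbox{\it dim}-1)$-dimensional rectangle; this costs on the order of $\mbox{\it diam}(R)\,k^{(\mbox{\it dim}-2)/(\mbox{\it dim}-1)}$, which is $O\big(\mbox{\it diam}(R)\big)$ only when $\mbox{\it dim}=2$, i.e.\ when the cut is a segment. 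For $\mbox{\it dim}\geq 3$ no repair argument can work, because geometric subadditivity (\ref{equation:definition:simpleSubadditivityGeometricSubadditivityAndSuperAdditivity:geometricSubadditivity}) is simply false for $M_B$: in $R=[0,1]^3$ split by the wall $\{1/2\}\times[0,1]^2$, place a $\sqrt{n}\times\sqrt{n}$ grid of points with spacing $\delta\approx n^{-1/2}$ in the central quarter of the wall, at distance $h=n^{-2}$ to its left. Then $M_B(F\cap R_1,R_1)\leq nh\leq 1$ (match every point to the wall, which is free boundary for $R_1$) and $M_B(F\cap R_2,R_2)=0$, while in $R$ every point is at distance at least $1/4$ from $\partial R$ and at distance at least $\delta$ from every other point, so $M_B(F,R)\geq \frac{n\delta}{2}\approx\frac{\sqrt{n}}{2}\to\infty$. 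Note also that your phrase ``sending them to the true boundary $\partial R$ at zero cost'' is not available: only transport \emph{along} $\partial R$ is free, and reaching $\partial R$ from $H$ costs up to $\mbox{\it diam}(R)$ per point.

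Your fallback, that the full bookkeeping is carried out in \cite{ProbabilityTheoryOfClassicalEuclideanOptimizationProblems, AMatchingProblemAndSubadditiveEuclideanFunctionals}, does not close the gap, because those references do not prove subadditivity of the boundary functional by any such repair; there $M_B$ is the \emph{superadditive} partner of the subadditive functional $M$, and the estimate actually needed for $M_B$ (the growth bound of Lemma~\ref{lemma:growthBounds}) is obtained from the trivial pointwise inequality $M_B(F,R)\leq M(F,R)$, which holds because every ordinary matching is a feasible boundary configuration in (\ref{equation:definition:2MatchingFunctionalAndBoundary2MatchingFunctional:boundaryMatchingFunctional}). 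That is also the correct way to salvage your proof for the purpose this lemma serves in the paper (the application of the growth bound to $M_B(F^*)$ inside the proof of Lemma~\ref{lemma:2MatchingInRandomGeometricGraphsCompleteConvergence}): establish the axioms and geometric subadditivity for $M$ exactly as you did, then transfer the growth bound to $M_B$ via $M_B\leq M$, rather than attempting geometric subadditivity of $M_B$ itself, which your segment-pairing argument can deliver only in dimension two.
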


	\begin{lemma}[growth bounds -- \cite{ProbabilityTheoryOfClassicalEuclideanOptimizationProblems}, originally \cite{AMatchingProblemAndSubadditiveEuclideanFunctionals}]
	\label{lemma:growthBounds}
	Let $P$ be a subadditive Euclidean functional. Then there exists a finite constant $C_4 \defeq C_4(\mbox{\it dim})$ such that for all $\mbox{\it dim}$-dimensional rectangles of $\rz^{\mbox{\it dim}}$ and for all $F \subset R$ we have
	\begin{equation}
		\label{equation:lemma:growthBounds}
		P(F, R) \leq C_4 \mbox{\it diam}(R) |F|^{\frac{\mbox{\it dim} - 1}{\mbox{\it dim}}} \ \text{.}
	\end{equation}
	\begin{proof}
		See~\cite{ProbabilityTheoryOfClassicalEuclideanOptimizationProblems}.
	\end{proof}
\end{lemma}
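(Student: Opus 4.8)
The plan is to derive the bound from geometric subadditivity alone, trading an interior term against a boundary‑correction term through a dyadic refinement whose depth is tuned to $|F|$. Both sides of the claimed inequality are homogeneous of degree one in the scale, so by homogeneity and translation invariance I may normalise $\mbox{\it diam}(R)=1$ and restore the factor $\mbox{\it diam}(R)$ at the end; all subrectangles produced by the refinement are similar copies of $R$, so at level $k$ their diameters are exactly $\mbox{\it diam}(R)\,2^{-k}$ regardless of the anisotropy of $R$.

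First I would establish a crude linear bound $P(G,Q)\le c_0\,\mbox{\it diam}(Q)\,|G|$ for every finite $G$ and every rectangle $Q$. The base case is a single point: subdividing $Q$ into $2^{\mbox{\it dim}}$ congruent pieces by $2^{\mbox{\it dim}}-1$ binary cuts and applying geometric subadditivity (all empty pieces contributing $0$ since $P(\emptyset,\cdot)=0$) gives $P(\{x\},Q)\le P(\{x\},Q')+(2^{\mbox{\it dim}}-1)C_2\,\mbox{\it diam}(Q)$, where $Q'$ is the half‑diameter piece containing $x$. Iterating, summing the geometric series in the diameters, and using that $P(\{x\},Q^{(k)})\to 0$ as the enclosing rectangle shrinks (by homogeneity and finiteness of $P$ on a fixed point configuration) yields $P(\{x\},Q)\le c_1\,\mbox{\it diam}(Q)$; a short induction on $|G|$ using one further binary split then upgrades this to the linear bound.

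The main step is the subdivision estimate. Refining $R$ dyadically to depth $K$ produces $m^{\mbox{\it dim}}$ subrectangles $R_i$ with $m=2^{K}$, each of diameter $\mbox{\it diam}(R)/m$, and iterated geometric subadditivity gives
\begin{equation}
P(F,R)\ \le\ \sum_{i=1}^{m^{\mbox{\it dim}}}P(F\cap R_i,R_i)\ +\ E_K ,
\end{equation}
where $E_K$ collects all boundary corrections. The crucial observation is that at level $k$ each of the $2^{(k-1)\mbox{\it dim}}$ parent rectangles is split by $2^{\mbox{\it dim}}-1$ cuts, each of diameter at most $\mbox{\it diam}(R)\,2^{-(k-1)}$, so the level‑$k$ contribution is at most $(2^{\mbox{\it dim}}-1)\,C_2\,\mbox{\it diam}(R)\,2^{(k-1)(\mbox{\it dim}-1)}$. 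Since $\mbox{\it dim}\ge 2$ this is a geometric series dominated by its last term, so $E_K\le C_3\,\mbox{\it diam}(R)\,m^{\mbox{\it dim}-1}$. For the interior I apply the crude bound to each leaf, giving $\sum_i P(F\cap R_i,R_i)\le c_0\,\tfrac{\mbox{\it diam}(R)}{m}\sum_i |F\cap R_i|=c_0\,\tfrac{\mbox{\it diam}(R)}{m}\,|F|$, since the $R_i$ partition $R\supseteq F$.

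Combining the two estimates gives $P(F,R)\le c_0\,\mbox{\it diam}(R)\,|F|/m+C_3\,\mbox{\it diam}(R)\,m^{\mbox{\it dim}-1}$, and choosing $m=\cei{|F|^{1/\mbox{\it dim}}}$ balances them: the interior term is at most $c_0\,\mbox{\it diam}(R)\,|F|^{(\mbox{\it dim}-1)/\mbox{\it dim}}$ and the boundary term is of the same order, yielding the claim with $C_4=c_0+C_3\,2^{\mbox{\it dim}-1}$. I expect the boundary bookkeeping in the subdivision estimate to be the main obstacle: the result fails with the naive $O(m^{\mbox{\it dim}})$ correction obtained by bounding every cut diameter by $\mbox{\it diam}(R)$, and one genuinely needs the all‑sides‑at‑once dyadic refinement so that the diameters decay geometrically and the series closes at the $m^{\mbox{\it dim}-1}$ scale; tuning $m\approx|F|^{1/\mbox{\it dim}}$ is then exactly what converts this into the final exponent $(\mbox{\it dim}-1)/\mbox{\it dim}$.
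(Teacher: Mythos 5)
Your overall strategy is sound, and it is in fact the classical argument behind this lemma: the paper itself gives no proof (it defers to the cited book of Yukich, where essentially your dyadic refinement with balancing appears), and your level-by-level accounting of the geometric-subadditivity corrections, the observation that for $\mbox{\it dim}\ge 2$ the error series is dominated by its last term so that $E_K = O\big(\mbox{\it diam}(R)\,m^{\mbox{\it dim}-1}\big)$, and the final choice $m \approx |F|^{1/\mbox{\it dim}}$ are all correct (up to the harmless adjustment that $m$ must be a power of two).

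The gap is in the base case of your crude linear bound. You claim $P(\{x\},Q^{(k)})\to 0$ ``by homogeneity and finiteness of $P$ on a fixed point configuration'', but the shrinking configurations $(\{x\},Q^{(k)})$ produced by congruent dyadic subdivision are \emph{not} rescalings of a fixed configuration: translation invariance and homogeneity give $P(\{x\},Q^{(k)}) = 2^{-k}P(\{y_k\},Q)$, where $y_k$ is the relative position of $x$ inside $Q^{(k)}$, i.e.\ the orbit of $x$ under the coordinatewise doubling map, which wanders (densely, for typical $x$). To conclude that $2^{-k}P(\{y_k\},Q)\to 0$ you would need $\sup_{y\in Q}P(\{y\},Q)<\infty$, which is a uniform statement of exactly the kind the lemma is trying to establish; finiteness of $P$ at each individual configuration does not give it. The standard repair uses the freedom in geometric subadditivity to cut anywhere: first cut $Q$ by the coordinate hyperplanes through $x$ itself ($2^{\mbox{\it dim}}-1$ cuts, total cost $O(C_2\,\mbox{\it diam}(Q))$), so that $x$ sits at a \emph{corner} of the piece $Q'$ containing it. From a corner, the dyadic piece containing $x$ at every later level is the anchored scaled copy $x+2^{-k}(Q'-x)$, so $P(\{x\},Q^{(k)})=2^{-k}P(\{x\},Q')$ genuinely is a rescaling of one fixed configuration and tends to $0$; rearranging the resulting inequality gives $P(\{x\},Q')\le 2(2^{\mbox{\it dim}}-1)C_2\,\mbox{\it diam}(Q)$ and hence the singleton bound. (Separately, your ``short induction'' upgrading singletons to $P(G,Q)\le c_0\,\mbox{\it diam}(Q)\,|G|$ does not close as stated, because each split costs an extra additive $C_2\,\mbox{\it diam}(Q)$; one needs the strengthened hypothesis $P(G,Q)\le a\,(2|G|-1)\,\mbox{\it diam}(Q)$ with $a\ge\max(c_1,C_2)$. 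This part is routine.)
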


	\begin{definition}[smoothness]
	\label{definition:smoothness}
	An Euclidean functional $P$ is {\em smooth} if there is a finite constant $C_3 \defeq C_3(\mbox{\it dim})$ such that for all sets $F, G \in [0, 1]^{\mbox{\it dim}}$ we have
	\begin{equation}
		\label{equation:definition:smoothness}
		\big|P(F \cup G) - P(F)\big| \leq C_3 |G|^\frac{d - 1}{d} \ \text{.}
	\end{equation}
\end{definition}

\begin{definition}[pointwise closeness]
	\label{definition:pointwiseCloseness}
	Say that Euclidean functionals $P$ and $P_B$ are {\em pointwise close} if for all subsets $F \subset [0, 1]^{\mbox{\it dim}}$ we have
	\begin{equation}
		\label{equation:definition:pointwiseCloseness}
		\left|P\left(F, [0, 1]^{\mbox{\it dim}}\right) - P_B\left(F, [0, 1]^{\mbox{\it dim}}\right)\right| = o\left(|F|^\frac{d - 1}{d}\right) \ \text{.}
	\end{equation}
\end{definition}

\fi

\begin{definition}[complete convergence\ifdefined\FULLVERSION\else{}, \cite{ProbabilityTheoryOfClassicalEuclideanOptimizationProblems}\fi]
	\label{definition:completeConvergence}
	A sequence of random variables $X_n$, $n \geq 1$, {\em converges completely (c.c.)} to a constant $C$ if and only if for all $\varepsilon > 0$ we have
	\begin{equation}
		\label{equation:definition:completeConvergence}
		\sum_{n = 1}^{\infty}{\mathbb{P}\left[|X_n - C| > \varepsilon\right]} < \infty \ \mbox{.}
	\end{equation}
\end{definition}

\ifdefined\FULLVERSION

\begin{theorem}[basic limit theorem for Euclidean functionals -- \cite{ProbabilityTheoryOfClassicalEuclideanOptimizationProblems}, originally \cite{AMatchingProblemAndSubadditiveEuclideanFunctionals}]
	\label{theorem:basicLimitTheoremForEuclideanFunctionals}
	Let $X_i$, $1 \leq i \leq n$, be independent and identically distributed random variables with values in the unit $\mbox{\it dim}$-dimensional rectangle $[0, 1]^{\mbox{\it dim}}$, $\mbox{\it dim} \geq 2$.
	
	If $P_B$ is a smooth superadditive Euclidean functional on $\rz^{\mbox{\it dim}}$, $\mbox{\it dim} \geq 2$, then
	\begin{equation}
		\label{equation:theorem:basicLimitTheorem:equation1}
		\lim_{n \to \infty}{\frac{P_B(X_1, X_2, \dots, X_n)}{n^{\frac{\mbox{\it dim} - 1}{\mbox{\it dim}}}}} = \alpha(P_B, \mbox{\it dim}) \quad \mbox{c.c.} \ \text{,}
	\end{equation}
	where $\alpha(P_B, \mbox{\it dim})$ is a positive constant.
	
	If $P$ is an Euclidean functional on $\rz^{\mbox{\it dim}}$, $\mbox{\it dim} \geq 2$, which is pointwise close to $P_B$, then
	\begin{equation}
		\label{equation:theorem:basicLimitTheorem:equation2}
		\lim_{n \to \infty}{\frac{P(X_1, X_2, \dots, X_n)}{n^{\frac{\mbox{\it dim} - 1}{\mbox{\it dim}}}}} = \alpha(P_B, \mbox{\it dim}) \quad \mbox{c.c.}
	\end{equation}
	\begin{proof}
		See~\cite{ProbabilityTheoryOfClassicalEuclideanOptimizationProblems}.
	\end{proof}
\end{theorem}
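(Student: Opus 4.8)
The plan is to prove the statement in two stages, mirroring the standard ``umbrella'' machinery for Euclidean functionals: first establish the limit~(\ref{equation:theorem:basicLimitTheorem:equation1}) for the smooth superadditive functional $P_B$, which produces the constant $\aaa(P_B, \mbox{\it dim})$, and then transfer the same limit to every Euclidean functional $P$ that is pointwise close to $P_B$, giving~(\ref{equation:theorem:basicLimitTheorem:equation2}). Throughout, $X_1, \dots, X_n$ are the i.i.d.\ points uniformly distributed on $[0,1]^{\mbox{\it dim}}$.

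First I would show convergence of the \emph{expected} values $\phi(n) \defeq \mathbb{E}\big[P_B(X_1, \dots, X_n)\big]$ after normalisation. Partition the unit cube into $m^{\mbox{\it dim}}$ congruent subcubes of side $1/m$. Superadditivity (inequality~(\ref{equation:definition:simpleSubadditivityGeometricSubadditivityAndSuperAdditivity:superadditivity})) bounds $P_B$ on the whole cube from below by the sum of its values on the subcubes; translation invariance and homogeneity (conditions~(\ref{equation:definition:subadditiveAndSuperadditiveEuclideanFunctional:condition2}) and~(\ref{equation:definition:subadditiveAndSuperadditiveEuclideanFunctional:condition3})) rescale each subcube contribution by the factor $1/m$, while the number of points falling into a fixed subcube is $\mathrm{Binomial}(n, m^{-\mbox{\it dim}})$. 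Taking expectations yields the approximate superadditivity
\[
	\phi(n) \;\geq\; m^{\mbox{\it dim} - 1}\, \mathbb{E}\big[\phi(N)\big], \qquad N \sim \mathrm{Binomial}\big(n,\, m^{-\mbox{\it dim}}\big).
\]
Smoothness (Definition~\ref{definition:smoothness}) serves two purposes here: applied with $F = \emptyset$ it gives the a priori bound $\phi(n) \leq C_3\, n^{(\mbox{\it dim}-1)/\mbox{\it dim}}$, and it lets me replace $\mathbb{E}[\phi(N)]$ by $\phi(n\, m^{-\mbox{\it dim}})$ up to an $o(n^{(\mbox{\it dim}-1)/\mbox{\it dim}})$ error. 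A routine superadditive-sequence argument then forces $\phi(n)/n^{(\mbox{\it dim}-1)/\mbox{\it dim}} \to \aaa$ for some $\aaa \geq 0$; positivity of $\aaa$ I would obtain from a matching lower bound on $P_B$ (each point contributes at least its nearest-neighbour distance, of expected order $n^{-1/\mbox{\it dim}}$, so the functional is of order at least $n^{(\mbox{\it dim}-1)/\mbox{\it dim}}$).

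Next I would upgrade this mean convergence to \emph{complete} convergence. Letting $\mathcal{F}_i$ be the $\sigma$-algebra generated by $X_1, \dots, X_i$, the Doob martingale $M_i \defeq \mathbb{E}[P_B \mid \mathcal{F}_i]$ has increments controlled by smoothness, since resampling a single point changes $P_B$ by at most $C_3$ (Definition~\ref{definition:smoothness} with a single-point set $G$). The Azuma--Hoeffding inequality then gives $\mathbb{P}\big[|P_B - \mathbb{E} P_B| > \eps\, n^{(\mbox{\it dim}-1)/\mbox{\it dim}}\big] \leq 2\exp\!\big(-c\,\eps^2\, n^{(\mbox{\it dim}-2)/\mbox{\it dim}}\big)$, whose right-hand side is summable in $n$ for $\mbox{\it dim} \geq 3$; combined with the mean convergence this is exactly complete convergence in the sense of Definition~\ref{definition:completeConvergence}. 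The delicate point is the boundary case $\mbox{\it dim} = 2$ -- precisely the one relevant to the planar $2$-matching -- where the exponent $(\mbox{\it dim}-2)/\mbox{\it dim}$ vanishes and the crude bounded-difference estimate is no longer summable. I expect this to be the main obstacle, and I would resolve it with the sharper concentration available for these functionals (the isoperimetric, Talagrand-type bounds recorded in~\cite{ProbabilityTheoryOfClassicalEuclideanOptimizationProblems}), which exploit that the typical influence of a single point is of order $n^{-1/\mbox{\it dim}}$ rather than $O(1)$.

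Finally, the transfer to a pointwise close Euclidean functional $P$ is the easy step. Pointwise closeness (Definition~\ref{definition:pointwiseCloseness}) gives $|P(X_1,\dots,X_n) - P_B(X_1,\dots,X_n)| = o\big(n^{(\mbox{\it dim}-1)/\mbox{\it dim}}\big)$ deterministically, so after dividing by $n^{(\mbox{\it dim}-1)/\mbox{\it dim}}$ the two normalised functionals differ by $o(1)$; hence $P/n^{(\mbox{\it dim}-1)/\mbox{\it dim}}$ inherits both the limit $\aaa(P_B,\mbox{\it dim})$ and its complete convergence from $P_B$, which establishes~(\ref{equation:theorem:basicLimitTheorem:equation2}). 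Applying the whole scheme with $P_B \defeq L_B$ (the boundary $2$-matching functional) and $P \defeq L$ would then yield the desired square-root asymptotic for the optimal $2$-matching plotted in Figure~\ref{figure:meanNumberOfIterationsMeanLengthOfAnOptimalTSPTourAndMeanLengthOfAnOptimalWeighted2Matching}.
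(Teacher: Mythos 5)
Your proposal faces an unusual comparison: the paper does not prove this theorem at all. It is imported as the standard ``umbrella'' theorem from \cite{ProbabilityTheoryOfClassicalEuclideanOptimizationProblems} (originally \cite{AMatchingProblemAndSubadditiveEuclideanFunctionals}), and the paper's proof consists of that citation. What you have written is a reconstruction of the proof in that reference, and in outline it is the right one: subdivision of the cube combined with superadditivity, translation invariance and homogeneity to get approximate superadditivity of the means; smoothness both for the a priori growth bound $\phi(n) \leq C_3 n^{(\mbox{\it dim}-1)/\mbox{\it dim}}$ and for controlling the binomial fluctuation error; a concentration inequality to upgrade convergence of means to complete convergence; and the transfer to any pointwise close $P$, which is indeed immediate from Definition~\ref{definition:pointwiseCloseness} and preserves complete convergence in the sense of Definition~\ref{definition:completeConvergence}.

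Two steps are, however, genuinely incomplete. First, your argument for the positivity of $\alpha(P_B, \mbox{\it dim})$ --- ``each point contributes at least its nearest-neighbour distance'' --- is not available at the stated level of generality: nothing in Definitions~\ref{definition:subadditiveAndSuperadditiveEuclideanFunctional} and~\ref{definition:smoothness} relates $P_B$ to nearest-neighbour sums, and the identically zero functional satisfies every hypothesis of the theorem yet has $\alpha = 0$. Positivity therefore requires either an additional non-degeneracy hypothesis or a verification specific to the functional at hand (as one would do for the boundary $2$-matching functional $L_B$); your sketch silently assumes a lower-bound structure that the axioms do not provide. Second, and more importantly for this paper, your concentration argument only covers $\mbox{\it dim} \geq 3$: as you note yourself, Azuma--Hoeffding with $O(1)$ martingale differences yields $\exp\left(-c \varepsilon^2 n^{(\mbox{\it dim}-2)/\mbox{\it dim}}\right)$, which is not summable when $\mbox{\it dim} = 2$ --- precisely the planar case needed for Theorem~\ref{theorem:SquareRootAsymptoticForThe2MatchingProblem}. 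Your proposed fix is to invoke the Rhee/Talagrand-type isoperimetric bounds ``recorded in'' \cite{ProbabilityTheoryOfClassicalEuclideanOptimizationProblems}, but at that point the only case the paper actually uses rests on the same black-box citation that the paper itself gives, so the proposal is not an independent proof where it matters. Closing this gap would require proving a concentration estimate of the shape $\mathbb{P}\left[|P_B - \mathbb{E}P_B| > t\right] \leq C\exp\left(-(t/C)^{2\mbox{\it dim}/(\mbox{\it dim}-1)}/n\right)$ for smooth functionals, which is the substantive content of the cited reference.
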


We can prove our result now.

\begin{lemma}
	\label{lemma:2MatchingInRandomGeometricGraphsCompleteConvergence}
	The 2-matching functional $L$ and the boundary 2-matching functional $L_B$ fulfill the conditions of Theorem~\ref{theorem:basicLimitTheoremForEuclideanFunctionals}.
	\begin{proof}
		The proof is a modification of similar proofs for other combinatorial optimization problems contained in~\cite{ProbabilityTheoryOfClassicalEuclideanOptimizationProblems}.
		
		\begin{enumerate}[label={(\arabic*)}]
			\item First, we show that the boundary 2-matching functional $L_B$ is a superadditive Euclidean functional. Equalities~(\ref{equation:definition:subadditiveAndSuperadditiveEuclideanFunctional:condition1}), (\ref{equation:definition:subadditiveAndSuperadditiveEuclideanFunctional:condition2}) and~(\ref{equation:definition:subadditiveAndSuperadditiveEuclideanFunctional:condition3}) are fulfilled obviously.
			
				Let us now show the superadditivity. We can distinguish $2$ cases in general:
				\begin{enumerate}
					\item Either the solution over the whole rectangle $R$ does not cross the boundary between the rectangles $R_1$ and $R_2$ at all or
					\item at least one subtour crosses the boundary between the rectangles $R_1$ and $R_2$.
				\end{enumerate}
				
				$L_B(F, R) = L_B(F, R_1) + L_B(F, R_2)$ obviously holds in the first case.
				
				Let us now consider a subtour crossing the boundary between the rectangles $R_1$ and $R_2$ (for an example see Figure~\ref{figure:theorem:basicLimitTheore:proof:exampleIllustratingTheSuperadditivityOfTheBoundary2MatchingFunctionalLB}). W.l.o.g. we can assume that the boundary is crossed between the points $v_1$ and $v_2$, and $v_3$ and $v_4$ and that the crossing points are $x$ and $y$ respectively. Furthermore, w.l.o.g. we can assume that $v_1, v_3 \in R_1$. Then the new subtour, containing the vertices $v_1$ and $v_3$, and lying in the rectangle $R_1$, consists of the following parts:
				\begin{itemize}
					\item the same path between the vertices $v_1$ and $v_3$ belonging to the rectangle $R_1$ as in the whole rectangle $R$,
					\item the orthogonal connections between the vertices $v_1$ and $v_3$ and the boundary between the both rectangles, and finally
					\item a piece of this boundary (see also Figure~\ref{figure:theorem:basicLimitTheore:proof:exampleIllustratingTheSuperadditivityOfTheBoundary2MatchingFunctionalLB}).
				\end{itemize}
				
				We have to choose the vertices $a$ and $b$ on this boundary in such a way that $a = \argmin{\alpha \in \partial R_1 \cap \partial R_2}{\{d(v_1, \alpha)\}}$ and $b = \argmin{\beta \in \partial R_1 \cap \partial R_2}{\{d(v_3, \beta)\}}$ hold in order to achieve the minimality. Due to this choice of the vertices $a$ and $b$ we can write $d(v_1, a) \leq d(v_1, x)$ and $d(v_3, b) \leq d(v_3, y)$ and since $d(a, b) = 0$ and the remaining part of the subtour belonging to the rectangle $R_1$ yield the same contribution to the objective value, we can claim that the contribution of this new subtour to the objective value is smaller or equal to the contribution of the part of the original subtour lying in the rectangle $R_1$. The same argument can be used for the second rectangle $R_2$ and for all other subtours crossing the boundary between the rectangles $R_1$ and $R_2$.
			
				\begin{figure}[htbp]
					\centering
					\begin{tikzpicture}[
							scale=0.6,
							node/.style={circle, draw=black!100, fill=white!0, thick, inner sep=0pt, minimum size=5mm}
						]
						
						\draw (0.00, 0.00) -- (20.00, 0.00) -- (20.00, 10.00) -- (0.00, 10.00) -- cycle;
						\draw (10.00, 0.00) -- (10.00, 10.00);
						
						\node[circle, draw=black!100, fill=black!100, thick, inner sep=0pt, minimum size=0.5mm, label=above:{\color{black} $v_1$}] (node0) at (8.50, 9.00) {};
						\node[circle, draw=black!100, fill=black!100, thick, inner sep=0pt, minimum size=0.5mm, label=above:{\color{black} $v_2$}] (node1) at (11.50, 7.00) {};
						\node[circle, draw=black!100, fill=black!100, thick, inner sep=0pt, minimum size=0.5mm, label=below:{\color{black} $v_3$}] (node2) at (8.50, 3.00) {};
						\node[circle, draw=black!100, fill=black!100, thick, inner sep=0pt, minimum size=0.5mm, label=below:{\color{black} $v_4$}] (node3) at (11.50, 1.00) {};
						\draw (node0) to (node1) to (node3) to (node2) to (node0);

							\node[circle, draw=black!100, fill=black!100, thick, inner sep=0pt, minimum size=0.5mm, label=below:{\color{black} $v_5$}] (node4) at (1.00, 1.00) {};
						\node[circle, draw=black!100, fill=black!100, thick, inner sep=0pt, minimum size=0.5mm, label=right:{\color{black} $v_6$}] (node5) at (2.00, 2.00) {};
						\node[circle, draw=black!100, fill=black!100, thick, inner sep=0pt, minimum size=0.5mm, label=above:{\color{black} $v_7$}] (node6) at (1.50, 3.00) {};
						\draw (node4) to (node5) to (node6) to (node4);
						
						\node[circle, draw=black!100, fill=black!100, thick, inner sep=0pt, minimum size=0.5mm, label=below:{\color{black} $v_8$}] (node7) at (16.00, 5.00) {};
						\node[circle, draw=black!100, fill=black!100, thick, inner sep=0pt, minimum size=0.5mm, label=below:{\color{black} $v_9$}] (node8) at (17.50, 5.50) {};
						\node[circle, draw=black!100, fill=black!100, thick, inner sep=0pt, minimum size=0.5mm, label=above:{\color{black} $v_{10}$}] (node9) at (18.00, 7.00) {};
						\node[circle, draw=black!100, fill=black!100, thick, inner sep=0pt, minimum size=0.5mm, label=above:{\color{black} $v_{11}$}] (node10) at (16.50, 6.50) {};
						\draw (node7) to (node8) to (node9) to (node10) to (node7);
						
						\node[circle, draw=black!100, fill=black!100, thick, inner sep=0pt, minimum size=0.5mm, label=right:{\color{black} $x$}] (nodeX) at (10.00, 8.00) {};
						\node[circle, draw=black!100, fill=black!100, thick, inner sep=0pt, minimum size=0.5mm, label= left:{\color{black} $y$}] (nodeY) at (10.00, 2.00) {};

							\node[circle, draw=black!100, fill=black!100, thick, inner sep=0pt, minimum size=0.5mm, label=right:{\color{black} $a$}] (nodeA) at (9.98, 9.00) {};
						\node[circle, draw=black!100, fill=black!100, thick, inner sep=0pt, minimum size=0.5mm, label=right:{\color{black} $b$}] (nodeB) at (9.98, 3.00) {};
						\node[circle, draw=black!100, fill=black!100, thick, inner sep=0pt, minimum size=0.5mm, label=left:{\color{black} $a^\prime$}] (nodeAPrime) at (10.02, 7.00) {};
						\node[circle, draw=black!100, fill=black!100, thick, inner sep=0pt, minimum size=0.5mm, label=left:{\color{black} $b^\prime$}] (nodeBPrime) at (10.02, 1.00) {};
						\draw[thick] (node0) to (nodeA) to (nodeB) to (node2) to (node0);
						\draw[dashed, very thick] (node1) to (nodeAPrime) to (nodeBPrime) to (node3) to (node1);
						
						\node (nodR1) at (5.00, 5.00) {$R_1$};
						\node (nodR2) at (15.00, 5.00) {$R_2$};
					\end{tikzpicture}
					\caption{Example illustrating the superadditivity of the boundary 2-matching functional $L_B$.}
					\label{figure:theorem:basicLimitTheore:proof:exampleIllustratingTheSuperadditivityOfTheBoundary2MatchingFunctionalLB}
				\end{figure}
			\item Next, we check some properties of the 2-matching functional $L$. Equalities~(\ref{equation:definition:subadditiveAndSuperadditiveEuclideanFunctional:condition1}), (\ref{equation:definition:subadditiveAndSuperadditiveEuclideanFunctional:condition2}) and~(\ref{equation:definition:subadditiveAndSuperadditiveEuclideanFunctional:condition3}) obviously hold.
				
				Further, it is easy to see that the 2-matching functional $L$ fulfill the geometric subadditivity. Since we minimize, the minimum weighted 2-matching over the whole rectangle $R$ can have only a smaller objective value than the sum of the objective values for the rectangles $R_1$ and $R_2$ taken separately.
				
				As a next step, we have to prove the pointwise closeness of the 2-matching functional $L$ to the boundary 2-matching functional $L_B$.
				
				First, note that $L_B\left(F, [0, 1]^{\mbox{\it dim}}\right) \leq L\left(F, [0, 1]^{\mbox{\it dim}}\right)$ always hold (see~(\ref{equation:definition:2MatchingFunctionalAndBoundary2MatchingFunctional:boundary2MatchingFunctional})). Thus it suffices to show
				\begin{equation}
					\label{equation:lemma:2MatchingInRandomGeometricGraphsCompleteConvergence:proof:part1:equation1}
					L\left(F, [0, 1]^{\mbox{\it dim}}\right) \leq L_B\left(F, [0, 1]^{\mbox{\it dim}}\right) + C_7|F|^\frac{d - 1}{d}
				\end{equation}
				where $C_7 \defeq C_7(\mbox{\it dim})$ is a finite constant.
				
				Let $F^* \subseteq F$ be the set of vertices which are connected with the boundary $\partial [0, 1]^{\mbox{\it dim}}$ by a path. Now, we remove all edges incident with the vertices contained in the vertex set $F^*$. If $|F^*| < 3$, we can just put these vertices to an arbitrary subtour (if such a subtour exists) and get the above inequality (the increase of the objective value can be easily bounded by $4 \sqrt{\mbox{dim}}$ in this case). If $|F^*| \geq 3$, we can construct an minimum weighted 2-matching on this vertex set and obtain
				\begin{equation}
					\label{equation:lemma:2MatchingInRandomGeometricGraphsCompleteConvergence:proof:part1:equation2}
					L\left(F, [0, 1]^{\mbox{\it dim}}\right) \leq L_B\left(F, [0, 1]^{\mbox{\it dim}}\right) + L\left(F^*, [0, 1]^{\mbox{\it dim}}\right) \ \text{.}
				\end{equation}
				And since $|F^*| \leq |F|$, we can use Lemma~\ref{lemma:growthBounds} and get inequality~(\ref{equation:lemma:2MatchingInRandomGeometricGraphsCompleteConvergence:proof:part1:equation1}).
			\item Finally, we prove the smoothness of the boundary 2-matching functional $L_B$. We will show the simple and geometric subadditivity of this functional first in order to be ably to show the smoothness.
			
			Let $F$ and $G$ be finite sets in $[0, t]^{\mbox{\it dim}}$. If the minimum weighted 2-matching for the vertex set $F \cup G$ equals to the minimum weighted 2-matchings for the vertex sets $F$ and $G$ joined together, inequality~(\ref{equation:definition:simpleSubadditivityGeometricSubadditivityAndSuperAdditivity:simpleSubadditivity}) holds with equality. Since we can always construct such a solution for the vertex set $F \cup G$, the objective value can be only smaller in the other case (note that we minimize it).
				
				Let us now prove the geometric subadditivity in order to fulfill the conditions of Lemma~\ref{lemma:growthBounds}. We know that the 2-matching functional $L$ is geometric subadditive. Now, it is easy to see that the proof of inequality~(\ref{equation:lemma:2MatchingInRandomGeometricGraphsCompleteConvergence:proof:part1:equation1}) can be easily modified in order to obtain
				\begin{equation}
					\label{equation:lemma:2MatchingInRandomGeometricGraphsCompleteConvergence:proof:part2:equation0}
					L(F, R) \leq L_B(F, R) + C_7 \mbox{\it diam}(R) |F|^\frac{d - 1}{d}
				\end{equation}
				for an arbitrary $\mbox{\it dim}$-dimensional rectangle. Since $L_B(F, R) \leq L(F, R)$, we obtain inequality~(\ref{equation:definition:simpleSubadditivityGeometricSubadditivityAndSuperAdditivity:geometricSubadditivity}) immediately.
				
				Using the simple subadditivity and Lemma~\ref{lemma:growthBounds} we get for all finite sets $F, G \subset [0, 1]^{\mbox{\it dim}}$
				\begin{equation}
					\label{equation:lemma:2MatchingInRandomGeometricGraphsCompleteConvergence:proof:part2:equation1}
					\begin{split}
						L_B(F \cup G)
							& \leq L_B(F) + \left(C_1 + C_4 \sqrt{\mbox{\it dim}}\right) |G|^{\frac{\mbox{\it dim} - 1}{\mbox{\it dim}}}\\
							& \leq L_B(F) + C_5 |G|^{\frac{\mbox{\it dim} - 1}{\mbox{\it dim}}}\\
					\end{split}
				\end{equation}
				where $C_5 \defeq C_5(\mbox{\it dim})$ denotes a finite constant. This completes this part of the proof if $L_B(F \cup G) - L_B(F) \geq 0$. Hence we just need to show the following inequality
				\begin{equation}
					\label{equation:lemma:2MatchingInRandomGeometricGraphsCompleteConvergence:proof:part2:equation2}
					L_B(F \cup G) \geq L_B(F) - C_6 |G|^{\frac{\mbox{\it dim} - 1}{\mbox{\it dim}}}
				\end{equation}
				for some finite constant $C_6 \defeq C_6(\mbox{\it dim})$.
				
				Consider the global minimum weighted 2-matching on the vertex set $G \cup F$ and remove all edges from all subtours incident with a vertex $g \in G$. This yield at most $|G|$ paths of a length of at least $1$ containing only vertices from the vertex set $F$ and some isolated points $F^\prime \subseteq F$. Let $F^*$ denote the set of all endpoints of those paths. Clearly, we have $|F^\prime| \leq |G|$ and $|F^*| \leq 2 |G|$. Consider now the boundary matching functional $M_B(F^*)$ and the corresponding matching $m$. This matching together with the disconnected paths and with parts of the boundary of the $\mbox{\it dim}$-dimensional rectangle $[0, 1]^{\mbox{\it dim}}$ yield a set of subtours $\{\tilde{F_i}\}_{i = 1}^N$ for some particular positive integer $N$. Furthermore, we can construct an minimum weighted 2-matching on the vertex set $F^\prime$ and get a feasible minimum weighted 2-matching. We can write
				\begin{equation}
					\label{equation:lemma:2MatchingInRandomGeometricGraphsCompleteConvergence:proof:part2:equation3}
					L_B(F) \leq L_B(F \cup G) + M_B(F^*) + L_B(F^\prime) \ \text{.}
				\end{equation}
				By using Lemmas~\ref{lemma:theMatchingFunctionalMAndTheBoundaryMatchingFunctionalMBAreSubadditiveEuclideanFunctionals} and~\ref{lemma:growthBounds} we obtain
				\begin{equation}
					\label{equation:lemma:2MatchingInRandomGeometricGraphsCompleteConvergence:proof:part2:equation4}
					L_B(F) \leq L_B(F \cup G) + C_6\left(|F^*|^{\frac{\mbox{\it dim} - 1}{\mbox{\it dim}}} + |F^\prime|^{\frac{\mbox{\it dim} - 1}{\mbox{\it dim}}}\right) \ \text{.}
				\end{equation}
				And since $|F^\prime| \leq |G| \leq 2 |G|$ and $|F^*| \leq 2 |G|$, we get
				\begin{equation}
					\label{equation:lemma:2MatchingInRandomGeometricGraphsCompleteConvergence:proof:part2:equation5}
					\begin{split}
						L_B(F)	& \leq L_B(F \cup G) + C_6\left(\left(2 |G|\right)^{\frac{\mbox{\it dim} - 1}{\mbox{\it dim}}} + 2 \left(|G|\right)^{\frac{\mbox{\it dim} - 1}{\mbox{\it dim}}}\right)\\
								& \leq L_B(F \cup G) + C_6\left(|G|\right)^{\frac{\mbox{\it dim} - 1}{\mbox{\it dim}}} \ \text{.}\\
					\end{split}
				\end{equation}
				This is exactly inequality~(\ref{equation:lemma:2MatchingInRandomGeometricGraphsCompleteConvergence:proof:part2:equation2}).				
		\end{enumerate}
	\end{proof}
\end{lemma}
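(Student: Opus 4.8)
The plan is to verify the two hypotheses of Theorem~\ref{theorem:basicLimitTheoremForEuclideanFunctionals} separately: that $L_B$ is a \emph{smooth superadditive Euclidean functional} and that $L$ is a \emph{Euclidean functional pointwise close to} $L_B$. Accordingly I would organise the argument into three blocks --- superadditivity of $L_B$, pointwise closeness of $L$ and $L_B$, and smoothness of $L_B$ --- reusing the growth bound (Lemma~\ref{lemma:growthBounds}) and the subadditivity of the matching functionals (Lemma~\ref{lemma:theMatchingFunctionalMAndTheBoundaryMatchingFunctionalMBAreSubadditiveEuclideanFunctionals}) as black boxes.

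First I would dispatch the three defining properties of a Euclidean functional from Definition~\ref{definition:subadditiveAndSuperadditiveEuclideanFunctional}, namely (\ref{equation:definition:subadditiveAndSuperadditiveEuclideanFunctional:condition1})--(\ref{equation:definition:subadditiveAndSuperadditiveEuclideanFunctional:condition3}); these are immediate, since a $2$-matching on the empty set is empty, and translating or uniformly scaling the point coordinates relabels or scales every feasible $2$-matching identically. For superadditivity of $L_B$ I would take an optimal boundary $2$-matching on $R = R_1 \cup R_2$ and split it along the dividing hyperplane: subtours contained in one sub-rectangle contribute unchanged, while a subtour crossing the boundary is cut by projecting the two crossing edges orthogonally onto $\partial R_1 \cap \partial R_2$. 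Because the orthogonal projection of a vertex onto the separating hyperplane is its nearest boundary point, the new connecting segments are no longer than the original crossing portions, and since $d(a,b)=0$ for boundary points the two cut halves are themselves \emph{feasible} boundary $2$-matchings in $R_1$ and $R_2$. Summing over all crossing subtours yields $L_B(F,R) \geq L_B(F,R_1) + L_B(F,R_2)$, which is the superadditivity~(\ref{equation:definition:simpleSubadditivityGeometricSubadditivityAndSuperAdditivity:superadditivity}).

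Next I would treat $L$. The Euclidean-functional axioms hold for the same reasons as for $L_B$, and geometric subadditivity is trivial in the minimisation direction: a global optimum on $R$ is never more expensive than the union of independent optima on $R_1$ and $R_2$. This makes $L$ a subadditive Euclidean functional, so Lemma~\ref{lemma:growthBounds} applies to it. For pointwise closeness I would use $L_B \leq L$ (immediate from~(\ref{equation:definition:2MatchingFunctionalAndBoundary2MatchingFunctional:boundary2MatchingFunctional})) and only bound $L$ from above by $L_B$: starting from an optimal boundary $2$-matching, let $F^*$ be the set of vertices attached to $\partial[0,1]^{\mbox{\it dim}}$; I would delete their boundary attachments and re-match $F^*$ internally --- absorbing $F^*$ into an existing subtour when $|F^*| < 3$, and otherwise forming a fresh internal $2$-matching on $F^*$ whose cost is controlled by Lemma~\ref{lemma:growthBounds}. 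This yields $L(F) \leq L_B(F) + C\,|F^*|^{(\mbox{\it dim}-1)/\mbox{\it dim}} = L_B(F) + o\big(|F|^{(\mbox{\it dim}-1)/\mbox{\it dim}}\big)$, the required closeness of Definition~\ref{definition:pointwiseCloseness}.

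Finally, the smoothness of $L_B$ is where the real work lies and I expect it to be the main obstacle. The upper half $L_B(F\cup G) \leq L_B(F) + C\,|G|^{(\mbox{\it dim}-1)/\mbox{\it dim}}$ follows from simple subadditivity together with the growth bound. The lower half, $L_B(F\cup G) \geq L_B(F) - C\,|G|^{(\mbox{\it dim}-1)/\mbox{\it dim}}$, is the delicate one: here I would take an optimal $2$-matching on $F \cup G$, delete every edge incident to a vertex of $G$, and rebuild a feasible boundary $2$-matching on $F$ out of the debris. Deletion leaves at most $|G|$ open paths with endpoint set $F^*$ (so $|F^*| \leq 2|G|$) together with at most $|G|$ isolated $F$-vertices $F'$. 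I would close the paths into subtours by a boundary matching $M_B(F^*)$ on their endpoints, route its edges along the boundary of $[0,1]^{\mbox{\it dim}}$, and cover $F'$ by a separate internal $2$-matching. Bounding $M_B(F^*)$ via Lemmas~\ref{lemma:theMatchingFunctionalMAndTheBoundaryMatchingFunctionalMBAreSubadditiveEuclideanFunctionals} and~\ref{lemma:growthBounds}, bounding $L_B(F')$ via Lemma~\ref{lemma:growthBounds}, and using $|F^*|,|F'| \leq 2|G|$, collapses the error into a single term $C_6\,|G|^{(\mbox{\it dim}-1)/\mbox{\it dim}}$. The subtle points to check will be that the reconstructed object is genuinely a feasible boundary $2$-matching --- in particular that every surviving vertex regains degree exactly two after re-closing the paths --- and that invoking the boundary matching functional on the endpoints is legitimate, which is precisely why the subadditivity of $M_B$ is needed.
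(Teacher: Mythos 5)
Your proposal follows essentially the same route as the paper's own proof: the identical boundary-cutting argument (with nearest boundary points and $d(a,b)=0$) for superadditivity of $L_B$, the identical $F^*$ construction for pointwise closeness of $L$ to $L_B$, and the identical delete-and-reconstruct argument for smoothness, closing the broken paths via $M_B(F^*)$ and covering the isolated vertices $F'$ by an internal $2$-matching. The only step you leave implicit that the paper spells out is establishing the geometric subadditivity of $L_B$ itself, which is what legitimizes applying Lemma~\ref{lemma:growthBounds} to $L_B$ in both halves of the smoothness bound.
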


\fi

\begin{theorem}
	\label{theorem:SquareRootAsymptoticForThe2MatchingProblem}
	Let $G = (V, E)$ be a random Euclidean graph with $n = |V|$ vertices and let $d \colon E \to \mathbb{R}^+_0$ be the Euclidean distance function.
	%defined as the Euclidean distance between the vertices $u$ and $v$ for every edge $e = (u, v)$. 
	Furthermore, let $M_2(G, d)$ be the length of an optimal $2$-matching. Then
	\begin{equation}
		\label{equation:theorem:SquareRootAsymptoticForThe2MatchingProblem:equation}
		\lim_{n \to \infty} \frac{M_2(G, d)}{\sqrt{n}} = \alpha\ \mbox{ c.c., where } \alpha > 0.
	\end{equation}
\end{theorem}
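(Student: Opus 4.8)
The plan is to recognise $M_2(G,d)$ as the 2-matching functional $L$ of Definition~\ref{definition:2MatchingFunctionalAndBoundary2MatchingFunctional} evaluated on the random point configuration, and then to invoke the basic limit theorem for Euclidean functionals (Theorem~\ref{theorem:basicLimitTheoremForEuclideanFunctionals}) specialised to dimension $\mbox{\it dim} = 2$. First I would fix the probabilistic model underlying the statement: a random Euclidean graph on $n$ vertices is exactly the point configuration $X_1,\ldots,X_n$ consisting of $n$ independent, uniformly distributed points in the unit square $[0,1]^2$, with $d$ the Euclidean metric. Under this identification the length of an optimal $2$-matching equals $M_2(G,d) = L\bigl(\{X_1,\ldots,X_n\},[0,1]^2\bigr)$, so the claim reduces to a convergence statement for the functional $L$ on uniform points.

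Next I would assemble the hypotheses needed by Theorem~\ref{theorem:basicLimitTheoremForEuclideanFunctionals}, all of which are supplied by Lemma~\ref{lemma:2MatchingInRandomGeometricGraphsCompleteConvergence}: the boundary $2$-matching functional $L_B$ is a smooth superadditive Euclidean functional, and the $2$-matching functional $L$ is an Euclidean functional that is pointwise close to $L_B$. With these in hand, I would apply equation~(\ref{equation:theorem:basicLimitTheorem:equation2}) of Theorem~\ref{theorem:basicLimitTheoremForEuclideanFunctionals} with $P = L$, $P_B = L_B$ and $\mbox{\it dim} = 2$. Since $(\mbox{\it dim}-1)/\mbox{\it dim} = 1/2$, the normalising factor $n^{(\mbox{\it dim}-1)/\mbox{\it dim}}$ is precisely $\sqrt{n}$, and the theorem yields $L(X_1,\ldots,X_n)/\sqrt{n} \to \alpha(L_B,2)$ completely, where $\alpha(L_B,2)$ is a positive constant. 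Setting $\alpha \defeq \alpha(L_B,2) > 0$ then gives exactly~(\ref{equation:theorem:SquareRootAsymptoticForThe2MatchingProblem:equation}).

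The theorem itself is thus an immediate corollary, and the genuine work sits entirely inside Lemma~\ref{lemma:2MatchingInRandomGeometricGraphsCompleteConvergence}. The main obstacle there is twofold: establishing the superadditivity of $L_B$ via the boundary-rerouting argument (replacing each crossing of the cut between $R_1$ and $R_2$ by orthogonal projections to the shared boundary, which cannot increase the objective since the boundary is cost-free), and proving that $L$ is pointwise close to $L_B$ by bounding the cost of reconnecting the vertices that touch the boundary through the growth bound of Lemma~\ref{lemma:growthBounds}. Once these two estimates are secured, smoothness of $L_B$ follows from simple subadditivity together with the same growth bound, and the specialisation of the general limit theorem to $\mbox{\it dim}=2$ requires no further argument.
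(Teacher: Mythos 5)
Your proposal is correct and follows exactly the paper's own route: the theorem is deduced as an immediate consequence of Theorem~\ref{theorem:basicLimitTheoremForEuclideanFunctionals} applied with $P = L$, $P_B = L_B$ and $\mbox{\it dim} = 2$, with all hypotheses supplied by Lemma~\ref{lemma:2MatchingInRandomGeometricGraphsCompleteConvergence}. Your summary of where the real work lies (superadditivity of $L_B$ via boundary rerouting, pointwise closeness via the growth bound, smoothness via simple subadditivity) also matches the structure of the paper's proof of that lemma.
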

	\begin{proof}
		\ifdefined\FULLVERSION
			The theorem immediately follows from Theorem~\ref{theorem:basicLimitTheoremForEuclideanFunctionals} and Lemma~\ref{lemma:2MatchingInRandomGeometricGraphsCompleteConvergence}.
		\else
			See our accompanying technical report~\cite{GeneratingSubtourEliminationConstraintsForTheTSPFromPureIntegerSolutions}.
		\fi
	\end{proof}

Based on these results the following idea might lead to a proof that the expected cardinality
${\cal S^*}$ is polynomially bounded:
After the first iteration of the algorithm we have a solution possibly consisting of several
separate subtours of total asymptotic length $\alpha \sqrt{n} = \alpha_1 \sqrt{n}$.
If there are subtours, we add subtour elimination constraints (in fact at most $\lfloor\frac{n}{3}\rfloor$), resolve the enlarged ILP and get another solution whose asymptotic length is $\alpha_2 \sqrt{n}$. By proving that the expected length of the sequence
$\alpha = \alpha_1, \ldots, \alpha_{\#i} = \beta$ is polynomially bounded in $n$,
one would obtain that also $\mathbb{E}\left[\left|S^*\right|\right]$ is polynomially bounded
since only polynomially many subtours are added in each iteration.
Our intuition and computational tests illustrated in 
Figure~\ref{figure:meanNumberOfIterationsMeanLengthOfAnOptimalTSPTourAndMeanLengthOfAnOptimalWeighted2Matching}, upper graph, indicate that the length of this sequence could be proportional to $\sqrt{n}$ as well.
Unfortunately, we could not find the suitable techniques to show this step.

\medskip
A different approach is illustrated by Figure~\ref{figure:meanNumberOfSubtoursDuringTheMainApproachInRandomEuclideanInstances},
where we examine the mean number of subtours contained in every iteration.
In particular, we chose $n = 60$, generated $100000$ random Euclidean instances and sorted them by
the number of iterations \#i. required by \mainApproach.
The most frequent number of ILP solver runs was $7$ (dotted line), 
but we summarize the results for $5$ (full line),
$6$ (dashed), $8$ (loosely dashed) and $9$ (loosely dotted) necessary runs in this figure as well. For every iteration of every class (with respect to the number of involved ILP runs)
we compute the mean number of subtours contained in the respective solutions.
As can be expected these numbers of subtours are decreasing (in average)
over the number of iterations.
To allow a better comparison of this behavior for different numbers of iterations
we scaled the iteration numbers into the interval $[0,10]$
(horizontal axis of 
 Figure~\ref{figure:meanNumberOfSubtoursDuringTheMainApproachInRandomEuclideanInstances}).
It can be seen that the average number of subtours contained in an optimal $2$-matching
(first iteration)
is about $9.2$ while in the last iteration we trivially have only one tour.
Between these endpoints we can first observe a mostly convex behavior,
only in the last step before reaching the optimal TSP tour
a sudden drop occurs.
It would be interesting to derive an asymptotic description of these curves.
An intuitive guess would point to an exponential function,
but so far we could not find a theoretical justification of this claim.

%In fact, we would like to demonstrate that our approach could provide a tool which could lead to a %polynomial bound on $\mathbb{E}\left[\left|S^*\right|\right]$.

\section{Conclusions}
	\label{section:conclusions}

In this paper we provide a  ``test of concept''
of a very simple approach to solve TSP instances of medium size to optimality
by exploiting the power of current ILP solvers.
The approach consists of iteratively solving ILP models with relaxed subtour elimination constraints
to integer optimality.
Then it is easy to find integral subtours and add the corresponding subtour elimination constraints to the ILP model.
Iterating this process until no more subtours are contained in the solution
obviously solves the TSP to optimality.

In this work we focus on the structure of subtour elimination constraints and how to find a ``good'' set of subtour elimination constraints in reasonable time.
Therefore, we aim to identify the local structure of the vertices of a given TSP instance
by running a clustering algorithm.
Based on empirical observations and results from random graph theory
we further extend this clustering-based approach and develop a hierarchical clustering
method with a mechanism to identify subtour elimination constraints as ``relevant'',
if they appear in consecutive iterations of the algorithm.

We mostly refrained from adding additional features which are highly likely to improve the performance considerably, such as starting heuristics (cf.\ Section~\ref{sec:compstart}),
lower bounds or adding additional cuts.
In the future it might be interesting to explore the limits of performance
one can reach with a purely integer linear programming approach by adding these improvements.
Clearly, we can not expect such a basic approach to compete with the performance of
{\em Concorde}~\cite{TheTravelingSalesmanProblemAComputionalStudy},
which has been developed over many years
and basically includes all theoretical and technical developments known so far.
However, it turns out that most of the standard benchmark instances with up to $400$ vertices
can be solved in a few minutes by this purely integer strategy.

Finally, we briefly discussed some theoretical aspect 
for random Euclidean graphs which could lead
to polyhedral results in the expected case.

\subsection*{Acknowledgements}

The research was funded by the Austrian Science Fund (FWF): P23829-N13.

\medskip
We would like to thank the developers of the SCIP MIP-solver from the Konrad-Zuse-Zentrum f\"{u}r Informationstechnik Berlin, especially Mr Gerald Gamrath, for their valuable support.
%They were always very helpful to answer our questions and
%clarify specific aspects of their software.
%ed to clear our questions over their software and they always tried to eliminate in SCIP occurred %bugs as fast as possible. Their support saved us much time.

\ifdefined\FULLVERSION\else{}
%\newpage
\fi

\bibliographystyle{IEEEtranSN}
\bibliography{GeneratingSEC_UP}

\setlength{\LTcapwidth}{\textheight}
\begin{landscape}
	\section*{Appendix}
		\label{sec:appendix}
		
		\vspace*{-0.7cm}
		
		\begin{multicols}{3}
			\begin{figurehere}
				\begin{center}
					% [inline block 0: 20 envs, 456465 chars -> data_tex | \begin{tikzpicture}[ycomb, scale=3.8] 						\node[circle, draw=black!100, fill=black!100, thick, inner sep=0pt, minimum ...]
				
				\end{comment:computationalResultsForRandomEuclideanInstances}
			\end{center}
			\caption{Mean number of iterations used by the \mainApproach\ (upper figure), mean length of an optimal TSP tour (lower figure, dashed) and mean length of an optimal weighted 2-matching (lower figure, full line) in random Euclidean graphs. For every number of vertices $n$ we created $100$ graphs.}
			 \label{figure:meanNumberOfIterationsMeanLengthOfAnOptimalTSPTourAndMeanLengthOfAnOptimalWeighted2Matching}
		\end{figure}
		
		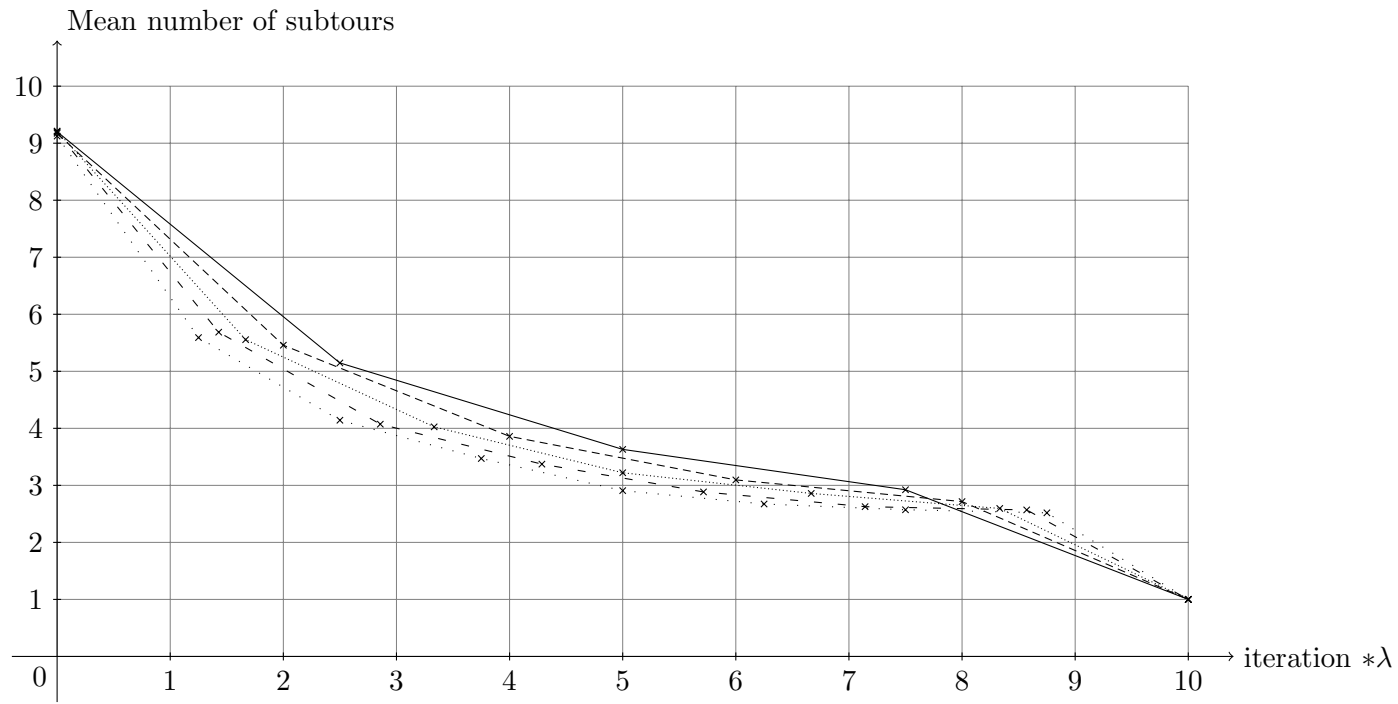
\begin{figure}[H]
			\begin{center}
				\begin{comment:computationalResultsForRandomEuclideanInstances}
					\begin{tikzpicture}[xscale=1.5, yscale=0.75]
						\draw[very thin, color=gray, ystep=1, xstep=1] (0, 0) grid (10, 10);
					
						\draw[black, shift={(0*10/4, 9.201923)}] (-0.8pt,-1.6pt) -- (0.8pt,1.6pt) (-0.8pt,1.6pt) -- (0.8pt,-1.6pt);
						\draw[black, shift={(1*10/4, 5.146635)}] (-0.8pt,-1.6pt) -- (0.8pt,1.6pt) (-0.8pt,1.6pt) -- (0.8pt,-1.6pt);
						\draw[black, shift={(2*10/4, 3.630769)}] (-0.8pt,-1.6pt) -- (0.8pt,1.6pt) (-0.8pt,1.6pt) -- (0.8pt,-1.6pt);
						\draw[black, shift={(3*10/4, 2.923077)}] (-0.8pt,-1.6pt) -- (0.8pt,1.6pt) (-0.8pt,1.6pt) -- (0.8pt,-1.6pt);
						\draw[black, shift={(4*10/4, 1.000000)}] (-0.8pt,-1.6pt) -- (0.8pt,1.6pt) (-0.8pt,1.6pt) -- (0.8pt,-1.6pt);
						\draw[black] (0*10/4, 9.201923) -- (1*10/4, 5.146635) -- (2*10/4, 3.630769) -- (3*10/4, 2.923077) -- (4*10/4, 1.000000);
						
						\draw[black, shift={(0*10/5, 9.173446)}] (-0.8pt,-1.6pt) -- (0.8pt,1.6pt) (-0.8pt,1.6pt) -- (0.8pt,-1.6pt);
						\draw[black, shift={(1*10/5, 5.457627)}] (-0.8pt,-1.6pt) -- (0.8pt,1.6pt) (-0.8pt,1.6pt) -- (0.8pt,-1.6pt);
						\draw[black, shift={(2*10/5, 3.859322)}] (-0.8pt,-1.6pt) -- (0.8pt,1.6pt) (-0.8pt,1.6pt) -- (0.8pt,-1.6pt);
						\draw[black, shift={(3*10/5, 3.095480)}] (-0.8pt,-1.6pt) -- (0.8pt,1.6pt) (-0.8pt,1.6pt) -- (0.8pt,-1.6pt);
						\draw[black, shift={(4*10/5, 2.715254)}] (-0.8pt,-1.6pt) -- (0.8pt,1.6pt) (-0.8pt,1.6pt) -- (0.8pt,-1.6pt);
						\draw[black, shift={(5*10/5, 1.000000)}] (-0.8pt,-1.6pt) -- (0.8pt,1.6pt) (-0.8pt,1.6pt) -- (0.8pt,-1.6pt);
						\draw[black, densely dashed] (0*10/5, 9.173446) -- (1*10/5, 5.457627) -- (2*10/5, 3.859322) -- (3*10/5, 3.095480) -- (4*10/5, 2.715254) -- (5*10/5, 1.000000);
						
						\draw[black, shift={(0*10/6, 9.212955)}] (-0.8pt,-1.6pt) -- (0.8pt,1.6pt) (-0.8pt,1.6pt) -- (0.8pt,-1.6pt);
						\draw[black, shift={(1*10/6, 5.554656)}] (-0.8pt,-1.6pt) -- (0.8pt,1.6pt) (-0.8pt,1.6pt) -- (0.8pt,-1.6pt);
						\draw[black, shift={(2*10/6, 4.025911)}] (-0.8pt,-1.6pt) -- (0.8pt,1.6pt) (-0.8pt,1.6pt) -- (0.8pt,-1.6pt);
						\draw[black, shift={(3*10/6, 3.221053)}] (-0.8pt,-1.6pt) -- (0.8pt,1.6pt) (-0.8pt,1.6pt) -- (0.8pt,-1.6pt);
						\draw[black, shift={(4*10/6, 2.858300)}] (-0.8pt,-1.6pt) -- (0.8pt,1.6pt) (-0.8pt,1.6pt) -- (0.8pt,-1.6pt);
						\draw[black, shift={(5*10/6, 2.595142)}] (-0.8pt,-1.6pt) -- (0.8pt,1.6pt) (-0.8pt,1.6pt) -- (0.8pt,-1.6pt);
						\draw[black, shift={(6*10/6, 1.000000)}] (-0.8pt,-1.6pt) -- (0.8pt,1.6pt) (-0.8pt,1.6pt) -- (0.8pt,-1.6pt);
						\draw[black, densely dotted] (0*10/6, 9.212955) -- (1*10/6, 5.554656) -- (2*10/6, 4.025911) -- (3*10/6, 3.221053) -- (4*10/6, 2.858300) -- (5*10/6, 2.595142) -- (6*10/6, 1.000000);
		
						\draw[black, shift={(0*10/7, 9.192053)}] (-0.8pt,-1.6pt) -- (0.8pt,1.6pt) (-0.8pt,1.6pt) -- (0.8pt,-1.6pt);
						\draw[black, shift={(1*10/7, 5.686093)}] (-0.8pt,-1.6pt) -- (0.8pt,1.6pt) (-0.8pt,1.6pt) -- (0.8pt,-1.6pt);
						\draw[black, shift={(2*10/7, 4.072848)}] (-0.8pt,-1.6pt) -- (0.8pt,1.6pt) (-0.8pt,1.6pt) -- (0.8pt,-1.6pt);
						\draw[black, shift={(3*10/7, 3.372185)}] (-0.8pt,-1.6pt) -- (0.8pt,1.6pt) (-0.8pt,1.6pt) -- (0.8pt,-1.6pt);
						\draw[black, shift={(4*10/7, 2.884768)}] (-0.8pt,-1.6pt) -- (0.8pt,1.6pt) (-0.8pt,1.6pt) -- (0.8pt,-1.6pt);
						\draw[black, shift={(5*10/7, 2.626490)}] (-0.8pt,-1.6pt) -- (0.8pt,1.6pt) (-0.8pt,1.6pt) -- (0.8pt,-1.6pt);
						\draw[black, shift={(6*10/7, 2.569536)}] (-0.8pt,-1.6pt) -- (0.8pt,1.6pt) (-0.8pt,1.6pt) -- (0.8pt,-1.6pt);
						\draw[black, shift={(7*10/7, 1.000000)}] (-0.8pt,-1.6pt) -- (0.8pt,1.6pt) (-0.8pt,1.6pt) -- (0.8pt,-1.6pt);
						\draw[black, loosely dashed] (0*10/7, 9.192053) -- (1*10/7, 5.686093) -- (2*10/7, 4.072848) -- (3*10/7, 3.372185) -- (4*10/7, 2.884768) -- (5*10/7, 2.626490) -- (6*10/7, 2.569536) -- (7*10/7, 1.000000);
						
						\draw[black, shift={(0*10/8, 9.124731)}] (-0.8pt,-1.6pt) -- (0.8pt,1.6pt) (-0.8pt,1.6pt) -- (0.8pt,-1.6pt);
						\draw[black, shift={(1*10/8, 5.591398)}] (-0.8pt,-1.6pt) -- (0.8pt,1.6pt) (-0.8pt,1.6pt) -- (0.8pt,-1.6pt);
						\draw[black, shift={(2*10/8, 4.141935)}] (-0.8pt,-1.6pt) -- (0.8pt,1.6pt) (-0.8pt,1.6pt) -- (0.8pt,-1.6pt);
						\draw[black, shift={(3*10/8, 3.470968)}] (-0.8pt,-1.6pt) -- (0.8pt,1.6pt) (-0.8pt,1.6pt) -- (0.8pt,-1.6pt);
						\draw[black, shift={(4*10/8, 2.907527)}] (-0.8pt,-1.6pt) -- (0.8pt,1.6pt) (-0.8pt,1.6pt) -- (0.8pt,-1.6pt);
						\draw[black, shift={(5*10/8, 2.673118)}] (-0.8pt,-1.6pt) -- (0.8pt,1.6pt) (-0.8pt,1.6pt) -- (0.8pt,-1.6pt);
						\draw[black, shift={(6*10/8, 2.572043)}] (-0.8pt,-1.6pt) -- (0.8pt,1.6pt) (-0.8pt,1.6pt) -- (0.8pt,-1.6pt);
						\draw[black, shift={(7*10/8, 2.518280)}] (-0.8pt,-1.6pt) -- (0.8pt,1.6pt) (-0.8pt,1.6pt) -- (0.8pt,-1.6pt);
						\draw[black, shift={(8*10/8, 1.000000)}] (-0.8pt,-1.6pt) -- (0.8pt,1.6pt) (-0.8pt,1.6pt) -- (0.8pt,-1.6pt);
						\draw[black, loosely dotted] (0*10/8, 9.124731) -- (1*10/8, 5.591398) -- (2*10/8, 4.141935) -- (3*10/8, 3.470968) -- (4*10/8, 2.907527) -- (5*10/8, 2.673118) -- (6*10/8, 2.572043) -- (7*10/8, 2.518280) -- (8*10/8, 1.000000);
										
						\draw[->] (-0.4, 0) -- (10.4, 0) node[right] {iteration $* \lambda$};
						\draw[->] (0, -0.8) -- (0, 10.8) node[above right] {Mean number of subtours};
						
						\foreach \pos in {0} \draw[shift={(\pos,0)}] node[below left] {$\pos$};
						\foreach \pos in {1, 2, 3, 4, 5, 6, 7, 8, 9, 10} \draw[shift={(\pos, 0)}] (0pt, 2pt) -- (0pt, -2pt) node[below] {$\pos$};
						\foreach \pos in {1, 2, 3, 4, 5, 6, 7, 8, 9, 10} \draw[shift={(0,\pos)}] (1pt, 0pt) -- (-1pt, 0pt) node[left] {$\pos$};
					\end{tikzpicture}
				\end{comment:computationalResultsForRandomEuclideanInstances}
			\end{center}
			\caption{Mean number of subtours during the \mainApproach\ in random Euclidean graphs for $n = 60$ sorted according to the number of iterations ($\lambda = $ {\color{black} $4/10$} (full line), {\color{black} $5/10$} (dashed), {\color{black} $6/10$} (dotted), {\color{black} $7/10$} (loosely dashed), {\color{black} $8/10$} (loosely dotted)). We created $100000$ graphs.}
			\label{figure:meanNumberOfSubtoursDuringTheMainApproachInRandomEuclideanInstances}
		\end{figure}
		
%\ifdefined\FULLVERSION
		\newpage
%\fi

		\addtolength{\tabcolsep}{-4pt}
		
		\begin{center}
			% [inline block 1: 1 envs, 20219 chars -> data_tex | \begin{longtable}{l||rrr|rrr|rrr|rrr} 					{\bf instance} & \multicolumn{3}{c|}{\bf \mainApproach} & \multicolumn{3}{c|}...]

		\end{center}		
		
		\vspace*{-1.7cm}
		
		\begin{itemize}
			\item {\bf \mainApproach}
			\item {\bf \boldmath $HC \mid n$ \unboldmath} -- hierarchical clustering; the constraints cannot be dropped and the maximum size of a solved cluster is $u = n$ (i.e.\ in fact, there is no upper bound)
			\item {\bf \boldmath $HC \mid 4 n / \log_2{n}$ \unboldmath} -- hierarchical clustering; the constraints cannot be dropped and the maximum size of a solved cluster is $u = 4 \frac{n}{\log_2{n}}$
			\item {\bf \boldmath $HCD \mid 4 n / \log_2{n}$ \unboldmath} -- hierarchical clustering; the constraints can be dropped and the maximum size of a solved cluster is $u = 4 \frac{n}{\log_2{n}}$
		\end{itemize}

%%%
\newpage
		
		\begin{center}
			% [inline block 2: 1 envs, 30282 chars -> data_tex | \begin{longtable}{l||rrr|rrr|rrr|rrr|rrr} 					{\bf instance} & \multicolumn{3}{c|}{\bf \mainApproach} & \multicolumn{3}...]

		\end{center}
		
		\vspace*{-1.7cm}
		
		\begin{itemize}
			\item {\bf \mainApproach}
			\item {\bf \boldmath $C \mid \lfloor n / 5 \rfloor$ \unboldmath} -- clustering for $c = \lfloor\frac{n}{5}\rfloor$
			\item {\bf \boldmath $RC_3 \mid \lfloor n / 5 \rfloor$ \unboldmath} -- restricted clustering for $c = \lfloor\frac{n}{5}\rfloor$; the minimum size of a cluster is $3$
			\item {\bf \boldmath $RC_3 \mid n$ \unboldmath} -- restricted clustering for $c = n$; the minimum size of a cluster is $3$
			\item {\bf \boldmath $HCD \mid 4 n / \log_2{n}$ \unboldmath} -- hierarchical clustering; the constraints can be dropped and the maximum size of a solved cluster is $u = 4 \frac{n}{\log_2{n}}$
		\end{itemize}		

		\normalsize
\end{landscape}

\end{document}